\renewcommand*\backref[1]{\ifx#1\relax \else (Cited on #1) \fi}
\numberwithin{equation}{section}
\newtheorem{thm}{Theorem}
\newtheorem{theorem}[thm]{Theorem}
\newtheorem{lem}{Lemma}[section]               
\newtheorem{lemma}[lem]{Lemma}               
\newtheorem{cor}[lem]{Corollary}
\newtheorem{corollary}[lem]{Corollary}               
\newtheorem{prop}[lem]{Proposition}
\theoremstyle{definition}
\newtheorem{defn}[lem]{Definition}
\theoremstyle{remark}
\newtheorem{remark}[lem]{Remark}
\numberwithin{figure}{section}
\numberwithin{table}{section}
\newcommand{\R}{\mathbb{R}}
\newcommand{\Z}{\mathbb{Z}}
\newcommand{\F}{\mathbb{F}}
\newcommand{\T}{\mathbb{T}}
\newcommand{\N}{\mathbb{N}}
\newcommand{\ra}{\rightarrow}
\newcommand{\sub}{\subset}
\DeclareMathOperator{\id}{id}
\DeclareMathOperator{\Ord}{Ord}
\newcommand{\al}{\alpha}
\newcommand{\be}{\beta}
\newcommand{\bal}{\pmb{\alpha}}
\newcommand{\bbe}{\pmb{\beta}}
\newcommand{\bga}{\pmb{\gamma}}
\newcommand{\cA}{\mathcal{A}}
\newcommand{\cD}{\mathcal{D}}
\newcommand{\cF}{\mathcal{F}}
\newcommand{\cG}{\mathcal{G}}
\newcommand{\cH}{\mathcal{H}}
\newcommand{\frs}{\mathfrak{s}}
\newcommand{\frt}{\mathfrak{t}}
\newcommand{\w}{\textbf{w}}
\newcommand{\x}{\textbf{x}}
\newcommand{\y}{\textbf{y}}
\newcommand{\z}{\textbf{z}}
\renewcommand{\L}{\mathbb{L}}
\newcommand{\U}{\mathbb{U}}
\newcommand{\SpinC}{\text{Spin}^{\text{C}}}
\newcommand{\tors}{\text{Tors}}
\newcommand{\pt}{\{\text{pt}\}}
\newcommand{\Sym}{\text{Sym}}
\newcommand{\cCFL}{\mathcal{C\!F\!L}}
\newcommand{\cHFL}{\mathcal{H\!F\! L}}
\newcommand{\CF}{\mathit{CF}}
\newcommand{\HFK}{\mathit{HFK}}
\newcommand{\CFL}{\mathit{CFL}}
\newcommand{\HFL}{\mathit{HFL}}
\newcommand{\HFKh}{\widehat{\HFK}}
\begin{document}

\title{Ribbon Homology Cobordisms and Link Floer Homology}
\author{Gary Guth}

\address{Department of Mathematics, Stanford University}
\email{gmguth@stanford.edu}

\begin{abstract}
We make use of link Floer homology to study cobordisms between links embedded in 4-dimensional ribbon homology cobordisms. Combining results of Daemi--Lidman--Vela-Vick--Wong and Zemke, we show that ribbon homology concordances induce split injections on $\cHFL^-$. We also make use of the torsion submodule of $\HFL^-$ to give restrictions on the number of critical points in ribbon homology concordances.
\end{abstract}

\maketitle

\section{Introduction}\label{Section: Intro}
A concordance $C$ between knots $K_0\sub S^3\times \{0\}$ and $K_1 \sub S^3\times \{1\}$ is an annulus embedded in $S^3\times [0,1]$ such that $K_i = C\cap S^3\times \{i\}$, for $i = 0, 1$. We say that $C$ is a \textit{ribbon concordance} if it has no interior local maxima with respect to the projection $S^3 \times [0, 1] \ra [0, 1]$. Equivalently, a concordance is ribbon if it admits a handle decomposition without any 2-handles. Ribbon concordances are directional. In fact, the notion of ribbon concordance gives rise to preorder on the set of knots in $S^3$: $K_0 \le K_1$ if there is a ribbon concordance \emph{from} $K_0$ \emph{to} $K_1$. Recent work of Agol, confirming a conjecture due to Gordon \cite{Gordon_ribbonconcordances}, states that ribbon concordance defines a partial order \cite{agol_ribbon_concordance}. 

Concordances induce maps on knot Floer homology. Moreover, by the work of Zemke in \cite{zemke_ribbon}, the geometric directionality of ribbon concordances carries through to the algebra, namely, the map induced by a ribbon concordance on knot Floer homology is a split injection. Work of Levine and Zemke proves the analogous result for Khovanov homology \cite{Kh_RibbonConcordance}.

In a similar spirit, one can consider cobordisms between 3-manifolds, which are \emph{homologically} products; a four-dimensional $R$-\textit{homology cobordism} $W$ from $Y_0$ to $Y_1$ is a smooth, compact, oriented $4$-manifold with boundary $-Y_0\cup Y_1$ satisfying $H_*(W, Y_i; R) =0,$ for $i = 0, 1$ and $R$ a ring. Cobordism will often be written as morphisms, $W: Y_0 \ra Y_1$, as cobordisms induce maps between $HF^\circ(Y_0)$ and $HF^\circ(Y_1)$. The directionality of these cobordisms can be taken in to account by studying \textit{ribbon} $R$-\textit{homology cobordisms}, which are $R$-homology cobordisms which admit a handle decomposition without any 3- or 4-handles. In \cite{DLVW_ribbon}, Daemi--Lidman--Vela-Vick--Wong prove that ribbon $\F_2$-homology cobordisms induce split injections on Heegaard Floer homology.

It is natural to ask whether these two results can be combined. Let $L_i \sub Y_i$ be oriented links in $Y_i$ for $i = 0,1$.  A \textit{link cobordism} between $(Y_0, L_0)$ and $(Y_1, L_1)$ is a four dimensional cobordism $W$ from $Y_0$ to $Y_1$ together with an oriented, compact surface $\Sigma$ embedded in $W$ such that $\Sigma \cap Y_i = K_i$ for $i = 0, 1$. We will say that a Morse function $h: W \ra \R$ is \textit{compatible with} $\Sigma$, if the restriction $h|_\Sigma$ is Morse as well.

\begin{defn}\label{def: ribbon link cobordism}
We say $(W, \Sigma): (Y_0, L_0) \ra (Y_1, L_1)$ is a \emph{ribbon link cobordism} if there exists a Morse function $h: W \ra \R$ compatible with $\Sigma$ such that $h$ has no critical points of index $3$ or $4$ and $h|_\Sigma$ has no critical points of index $2$. 
\end{defn}

For a handle theoretic interpretation, see the proof of Theorem \ref{Main Theorem}.

\begin{remark}
In the case $\Sigma$ is an annulus, it is necessary for the Morse function to \emph{simultaneously} give a ribbon structure to $W$ and $\Sigma$. If no restrictions are imposed on the critical points of $h$, one can always find a Morse function on $W$ which restricts to be ribbon on $\Sigma$. See Section \ref{section: applications}.
\end{remark}

\begin{defn}\label{Def: Homology concordance}
If $K_0$ and $K_1$ are knots in 3-manifolds $Y_0$ and $Y_1$ respectively, the pairs $(Y_0, K_0)$ and $(Y_1, K_1)$ are $R$-homology concordant if there is an $R$-homology cobordism $W:Y_0 \ra Y_1$ in which $K_0$ and $K_1$ cobound an embedded annulus $C$. The pair $(W, C)$ is a ribbon $R$-homology concordance if $(W, C)$ is a ribbon link cobordism in the sense of Definition \ref{def: ribbon link cobordism}.
\end{defn}

Situations in which either $W$ or $\Sigma$ are simple with respect to a Morse function will arise frequently enough to justify introducing the following terminology.

\begin{defn}\label{def: morse trivial}
If $(W, \Sigma)$ is a link cobordism equipped with a Morse function $h: W \ra \R$ compatible with $\Sigma$ such that $h$ and $h|_\Sigma$ have no critical points, we say that $(W, \Sigma)$ is \textit{Morse-trivial} with respect to $h$. We say that $(W, \Sigma)$ is \textit{concordance Morse-trivial} with respect to $h$ if $h|_\Sigma$ has no critical points.
\end{defn}

If we say that a pair $(W, \Sigma)$ is Morse-trivial (or concordance Morse-trivial) without reference to a particular Morse function, we simply mean that there exists a Morse function with respect to which $(W, \Sigma)$ has the stated property.

By work of Zemke in \cite{zemke_linkcob}, a link cobordism $(W, \Sigma)$ from $(Y_0, K_0)$ to $(Y_1, K_1)$ induces a map on link Floer homology $\cHFL^-$, once a decoration for $\Sigma$ is chosen (cf. Section \ref{Subsection: Link Floer TQFT}). Our main result states that ribbon $\Z$-homology concordances induce split injections on $\cHFL^-$.

\begin{theorem}\label{Main Theorem}
\label{Main theorem}
Let $(W, \cF): (Y_0, K_0) \ra (Y_1, K_1)$ be a ribbon $\Z$-homology concordance with $\cF = (C, \cA)$ and $\cA$ a pair of parallel arcs. Then the induced map \[F_{W, \cF, \frs}: \cHFL^-(Y_0, K_0, \frs|_{Y_0}) \ra \cHFL^-(Y_1, K_1, \frs|_{Y_1})\] is a split injection.
\end{theorem}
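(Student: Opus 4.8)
The plan is to prove that the reversed cobordism provides a one-sided inverse to $F_{W,\cF,\frs}$. Reversing a Morse function witnessing the ribbon structure yields $(\ol W,\ol\cF)\colon(Y_1,K_1)\ra(Y_0,K_0)$, a \emph{co}ribbon $\Z$-homology concordance: it has a handle decomposition with only $2$- and $3$-handles, its surface $\ol C$ is built from bands and deaths, the decoration $\ol\cA$ is again a pair of parallel arcs, and --- since $C$ is an annulus --- $C$ has equally many births as saddles and no deaths. Because $W$ is a $\Z$-homology cobordism, so are $\ol W$ and $\ol W\circ W$, and hence $\SpinC$ structures on all three are determined by their restrictions to the boundary; functoriality of Zemke's link cobordism maps then gives $F_{\ol W,\ol\cF,\frs}\circ F_{W,\cF,\frs}=F_{(\ol W,\ol\cF)\circ(W,\cF),\,\frS}$ for the induced $\SpinC$ structure $\frS$. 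So it suffices to show that the composite link cobordism $(\ol W,\ol\cF)\circ(W,\cF)\colon(Y_0,K_0)\ra(Y_0,K_0)$ induces the identity on $\cHFL^-(Y_0,K_0,\frs|_{Y_0})$.

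Next I would decompose the composite into elementary link cobordisms. Applying the rearrangement lemma to the compatible Morse function, $(W,\cF)$ factors, from the bottom up, as: births of unknotted unlinked components; $4$-dimensional $1$-handle cobordisms; band surgeries; and $4$-dimensional $2$-handle cobordisms. The ribbon hypothesis lets one take all $1$- and $2$-handle attaching regions disjoint from the link and from a collar of the bands. Reversing and stacking, the composite therefore factors as births, $1$-handles, bands, $2$-handles, reversed $2$-handles, reversed bands, reversed $3$-handles, deaths. Since every $4$-dimensional handle cobordism is supported in a region of the ambient $3$-manifold disjoint from the link and from all bands, these $4$-dimensional pieces commute --- as decorated link cobordisms up to isotopy, hence as maps --- past every surface piece. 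Reshuffling all the $4$-dimensional pieces below all the surface pieces (preserving the internal order of the handles), the composite map becomes $F_{\ol C\circ C}\circ F_{\ol W\circ W}$: the map induced by the doubled ribbon concordance $\ol C\circ C\subset Y_0\times I$ with its inherited parallel-arc decoration, precomposed with the map induced by $(\ol W\circ W,\,K_0\times I)$, the doubled ribbon $\Z$-homology cobordism carrying the trivial annulus.

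Each of these two maps is the identity. For $\ol W\circ W$, the handle-cancellation argument of Daemi--Lidman--Vela-Vick--Wong --- which is local to the handles and therefore unaffected by the $\cHFL^-$ flavor and by the presence of the disjoint link $K_0$ --- shows that the reversed $2$-handles cancel the $2$-handles and the reversed $3$-handles cancel the $1$-handles at the level of induced maps, so $F_{\ol W\circ W}=\id$. For $\ol C\circ C$, Zemke's argument applies: listing the bands of $\ol C$ in the reverse order to the bands of $C$, the topmost band of $C$ is immediately followed by the reversed band undoing it, which induces the identity; removing canceling pairs inductively leaves only the births of $C$ immediately followed by their reverses, the deaths of $\ol C$, and a birth of an unknotted component immediately followed by the death of that same component induces the identity on $\cHFL^-$. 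Keeping track of the decoration (a pair of parallel arcs throughout) identifies the fully cancelled cobordism with the trivial decorated product, whose map is $\id$. Thus $F_{\ol W,\ol\cF,\frs}\circ F_{W,\cF,\frs}=\id$, so $F_{W,\cF,\frs}$ admits a left inverse and is a split injection.

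The step I expect to be the main obstacle is the reshuffling in the second paragraph: one must verify that the handle data guaranteed by Definition~\ref{def: ribbon link cobordism} really can be arranged so that each $4$-dimensional handle is disjoint from each band (so that the $4$-dimensional and surface operations commute), and one must carry the parallel-arc decoration coherently through all of the ensuing isotopies and handle slides so that the cancellation genuinely terminates at the trivial decorated concordance. A secondary point is to confirm that the Daemi--Lidman--Vela-Vick--Wong cancellation, originally stated for $\HFh$, goes through unchanged for $\HF^-$ (hence $\cHFL^-$) in the presence of a disjoint link.
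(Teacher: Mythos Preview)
The reshuffling step in your second paragraph is a genuine gap, not just a technicality, and it is exactly the obstruction the paper works hard to circumvent. After the $4$-dimensional $1$-handles are attached, the bands of $C$ live in $\wt Y = Y_0\#^n(S^1\times S^2)$ and may run over those $1$-handles in an essential way; likewise the attaching circles of the $2$-handles may link the banded link. There is no reason the bands can be isotoped into $Y_0$, so you cannot in general slide the surface pieces past the $4$-dimensional handles to obtain a factorization $F_{\ol C\circ C}\circ F_{\ol W\circ W}$. (Example~\ref{example: mirror trefoil} and Proposition~\ref{prop: ribbon disks fusion number} already illustrate how freely the surface and ambient handles interact.) Your claim that ``every $4$-dimensional handle cobordism is supported in a region \dots\ disjoint from \dots\ all bands'' presupposes the conclusion.

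The paper avoids this by never attempting to separate the surface from the ambient handles. Instead, it first uses sphere tubing (Proposition~\ref{prop: sphere tubing}, Lemma~\ref{lemma: doubled disks are nullhomologous in ZH cob}) to replace $D(C)$ inside $D(W)$ by a concordance with no surface critical points at all. Then, for this concordance Morse-trivial case, it compares $D(W)$ to $Y_0\times I$ not by handle cancellation but by realizing both as surgeries on a common intermediate $X=D(W_1)$ and invoking Proposition~\ref{prop: surgery preserves link cob map}; the key computation is that the $H_1(X)/\tors$-action, realized as a link cobordism map via the tubed-torus construction of Section~\ref{Section: Homology Action}, makes the two surgeries induce the same map. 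This $H_1$-action argument is essentially the DLVW mechanism, but it is not a local handle cancellation and does not transplant to the naive factorization you propose.
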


In particular, if knot Floer homology obstructs $K_0$ and $K_1$ from being ribbon concordant in $S^3\times I$ with respect to the usual Morse function, they cannot be ribbon concordant through any ribbon $\Z$-homology cobordism $W: S^3 \ra S^3$. This is perhaps unsurprising, as any such ribbon homology cobordism is necessarily a \emph{homotopy} $S^3\times I$ by \cite[Theorem 1.14]{DLVW_ribbon} . 

A weaker version of our result can be deduced from \cite[Theorem 4.12]{DLVW_ribbon}, which states that if there is a ribbon $\F_2$-homology cobordism from a sutured manifold $(M_0, \eta_0)$ to a sutured manifold $(M_1, \eta_1)$, the sutured Floer homology of the first is isomorphic to a summand of the second. By the isomorphism between $\HFKh$ of a nullhomologous knot $K$ and $SFH$ of its exterior \cite{Juhasz_HolomorphicDisks_SuturedMflds} , it follows that $\HFKh(Y_0, K_0)$ is isomorphic to a summand of $\HFKh(Y_1, K_1)$ whenever there is a cobordism $(W, C)$ from $(Y_0, K_0)$ to $(Y_1, K_1)$ such that exterior of a neighborhood of $C$ is a ribbon $\F_2$-homology cobordism. In particular, this holds whenever there exists a ribbon $\F_2$-homology concordance from $(Y_0, K_0)$ to $(Y_1, K_1)$. \\

Recall that $\HFKh(Y, K)$ of a nullhomologous knot $K\sub Y$ splits as $\bigoplus_j \HFKh(Y, K, j)$, where $j$ is the Alexander grading. By \cite{ni_nonseparatingspheres}, for a nullhomologous knot $K \sub Y$ such that $Y - K$ is irreducible, $g_Y(K) = \max\{j: \HFKh(Y, K, j) \ne 0\}$, where $g_Y(K)$ is the Seifert genus of $K$. An immediate corollary of Theorem \ref{Main theorem} and also \cite[Theorem 4.12]{DLVW_ribbon} is the following.

\begin{corollary} Let $K_0$ and $K_1$ be null-homologous knots 3-manifolds $Y_0$ and $Y_1$ such that $Y_i - K_i$ is irreducible for $i=0,1$. If there is a ribbon $\Z$-homology concordance from $(Y_0, K_0)$ to $(Y_1, K_1)$, then \[g_{Y_0}(K_0) \le g_{Y_1}(K_1),\] where $g_{Y_i}(K_i)$ is the Seifert genus of $K_i$ in $Y_i$. 
\end{corollary}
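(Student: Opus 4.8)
The plan is to convert the split injectivity furnished by Theorem~\ref{Main theorem} into an Alexander-graded summand statement for $\HFKh$, and then quote Ni's genus-detection theorem \cite{ni_nonseparatingspheres}. Concretely, I would first show that for every $j \in \Z$ the group $\HFKh(Y_0, K_0, j)$ is isomorphic to a direct summand of $\HFKh(Y_1, K_1, j)$, and then observe that this, combined with irreducibility of the exteriors and Ni's theorem, is exactly the inequality.

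For the graded summand statement I would run either of the two routes indicated just before the corollary. Since a ribbon $\Z$-homology concordance is in particular a ribbon $\F_2$-homology concordance, deleting an open tubular neighborhood of $C$ from $W$ produces a ribbon $\F_2$-homology cobordism between the sutured knot exteriors $(Y_0 \sm N(K_0), \mu_0)$ and $(Y_1 \sm N(K_1), \mu_1)$, each with a pair of meridional sutures; then \cite[Theorem 4.12]{DLVW_ribbon} makes $SFH(Y_0 \sm N(K_0))$ a direct summand of $SFH(Y_1 \sm N(K_1))$, and Juhász's identification $SFH(Y \sm N(K)) \cong \HFKh(Y, K)$ \cite{Juhasz_HolomorphicDisks_SuturedMflds} turns this into a summand inclusion of knot Floer homologies. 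Because the cobordism is a homology product and $C$ is an annulus, this inclusion respects the decomposition along relative $\SpinC$ structures, i.e.\ along the Alexander grading, giving the claim for each $j$. (Alternatively, one runs this through Theorem~\ref{Main theorem} directly: $F_{W, \cF, \frs}$ is equivariant over the coefficient ring, it preserves the Alexander grading because $\chi(C) = 0$ and $\cA$ carries no basepoints, and it is split at the chain level, so it descends to a grading-preserving split injection on $\HFKh$.) Granting this, set $j_0 := g_{Y_0}(K_0)$. Since $Y_i \sm K_i$ is irreducible, \cite{ni_nonseparatingspheres} gives $g_{Y_i}(K_i) = \max\{ j : \HFKh(Y_i, K_i, j) \ne 0 \}$ for $i = 0, 1$; in particular $\HFKh(Y_0, K_0, j_0) \ne 0$, hence $\HFKh(Y_1, K_1, j_0) \ne 0$ by the summand inclusion in Alexander grading $j_0$, and therefore $g_{Y_1}(K_1) \ge j_0 = g_{Y_0}(K_0)$.

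Essentially all of the substance lives in the invoked results — Theorem~\ref{Main theorem} (equivalently \cite[Theorem 4.12]{DLVW_ribbon}), Juhász's isomorphism, and Ni's theorem — so the only point that needs genuine care is the grading bookkeeping in the first step: I must confirm that the splitting of $\HFKh$ (equivalently of $SFH$ of the exterior) is compatible with the Alexander ($\SpinC$) decomposition, so that nonvanishing in the \emph{top} Alexander grading is transported and not merely nonvanishing of the total homology. This is precisely where it matters that the cobordism is a concordance rather than a genus-changing link cobordism — via vanishing of the Alexander grading shift of Zemke's maps on the link Floer side, or, on the sutured side, via the fact that an $\F_2$-homology product identifies the affine $H_1$-torsors of $\SpinC$ structures on the two knot exteriors compatibly with the meridional framings. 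Everything else, including the passage from $\cHFL^-$ to $\HFKh$ and the maximum argument, is routine.
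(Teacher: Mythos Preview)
Your proposal is correct and matches the paper's approach exactly: the paper states this as an ``immediate corollary'' of Theorem~\ref{Main theorem} (and equivalently of \cite[Theorem 4.12]{DLVW_ribbon}) together with Ni's genus-detection result, without writing out a proof. Your write-up is in fact more careful than the paper, since you make explicit the one nontrivial point---that the split injection respects the Alexander grading---which the paper leaves implicit.
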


We will also consider an algebraic reduction of $\cHFL^-(Y, K, \frs)$, denoted $\HFL^-(Y, K, \frs).$  $\HFL^-(Y, K, \frs)$ is a finitely generated $\F[V]$-module, and therefore can be decomposed into a free summand and a torsion summand, which is denoted $\HFL^-_{red}(Y, K, \frs)$. 

\begin{defn}\label{def: torsion order}
Let $K$ be a nullhomologous knot in a 3-manifold $Y$. Define the \textit{torsion order of $K$ in $Y$} to be the quantity
\[
\Ord_V(Y, K, \frs) = \min\{d \in \N: V^d\cdot \HFL^-_{red}(Y, K, \frs) = 0\}.
\]
\end{defn}

Juhász-Miller-Zemke \cite{JMZ_TorsionOrder} use the the torsion order of knots in $S^3$ to give bounds on many topological invariants of knots, including the fusion number, the bridge index, and the cobordism distance. We prove an analogue of \cite[Theorem 1.2]{JMZ_TorsionOrder} in the ribbon homology cobordism setting. 

\begin{theorem}\label{theorem: tors bounds}
Suppose $(W, \Sigma): (Y_0, K_0) \ra (Y_1, K_1)$ is a $\Z$-homology link cobordism such that $W$ is ribbon with respect to a Morse function $h: W \ra \R$ compatible with $\Sigma$. Suppose $\Sigma$ has $m$ critical points of index 0 and $M$ critical points of index 2 with respect to $h|_{\Sigma}$. Then 
\[
\Ord_V(Y_0, K_0, \frs|_{Y_0}) \le \max\{M, \Ord_V(Y_1, K_1)\} + 2g(\Sigma).
\]
When $W$ is a product, we also have
\[
\Ord_V(Y_1, K_1, \frs|_{Y_1}) \le \max\{m, \Ord_V(Y_0, K_0)\} + 2g(\Sigma).
\]
\end{theorem}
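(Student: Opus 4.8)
The plan is to factor the link cobordism into elementary pieces dictated by a handle decomposition adapted to $h$, bound the effect of each piece on the torsion order $\Ord_V$, and then add up the contributions. First I would invoke the ribbon structure on $W$: since $h$ has no index $3$ or $4$ critical points, $W$ is built from $Y_0\times I$ by attaching $1$- and $2$-handles, and by the argument used for Theorem \ref{Main Theorem} the induced map on $\cHFL^-$ (and hence on the algebraic reduction $\HFL^-$) admits a one-sided inverse, so the map associated to the ``cobordism part'' is a split injection. In particular such a map does not increase torsion order: if $F$ is a split injection of $\F[V]$-modules then $\Ord_V$ of the source is at most $\Ord_V$ of the target (a splitting identifies the source with a summand, and $V^d$ annihilates a summand whenever it annihilates the whole). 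This reduces the problem to controlling how $\Ord_V$ changes along the \emph{surface} part of the cobordism: a sequence of births (index $0$ on $\Sigma$), saddles (index $1$), and deaths (index $2$), sitting over a product cobordism.

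Next I would analyze each surface handle using Zemke's link cobordism TQFT. A birth introduces an unknot component, changing the link Floer homology by tensoring with the relevant small module; this has a predictable, bounded effect on $\Ord_V$ (indeed it does not raise it in the relevant normalization). A death can \emph{a priori} raise the torsion order, but by exactly $1$ each time, contributing the term $M$ (the number of index-$2$ critical points) in the first inequality — here I would mimic the ``connected sum / merge-and-cap'' estimate in \cite{JMZ_TorsionOrder}, where capping off is the operation responsible for the jump. Saddles contribute in pairs: a merge followed by a split raises $\Ord_V$ by at most $1$, and the number of such excess pairs is controlled by the genus, giving the $2g(\Sigma)$ term. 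Reading the composite from $K_0$ to $K_1$ one way gives $\Ord_V(Y_0,K_0,\frs|_{Y_0})\le \max\{M,\Ord_V(Y_1,K_1)\}+2g(\Sigma)$; when $W$ is a product the cobordism maps are honest isomorphisms on each side, so the argument is symmetric and running it in the reverse direction (where the roles of index-$0$ and index-$2$ points of $h|_\Sigma$ swap) yields the second inequality with $m$ in place of $M$.

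The main obstacle I anticipate is \emph{bookkeeping the grading shifts and the $V$-action across the handle attachments}: Zemke's maps are not $\F[V]$-linear on the nose in the naive sense, and one must track the basepoint actions $\Phi$, $\Psi$ and the decoration $\cA=(\cup\ (\text{parallel arcs}))$ carefully so that ``raises $\Ord_V$ by at most $1$'' is literally true for each saddle/death rather than off by a uniform constant. A secondary subtlety is ensuring that the splitting of the cobordism part is compatible with the algebraic reduction $\cHFL^-\rightsquigarrow\HFL^-$ and with the $\SpinC$ decomposition, so that the inequality holds $\frs$-wise; this should follow formally from naturality of the reduction, but it needs to be stated. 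Once these two points are pinned down, the proof is an induction on the number of critical points of $h|_\Sigma$, with the base case being the product (concordance Morse-trivial) situation, where the estimate is immediate from the split injectivity established above.
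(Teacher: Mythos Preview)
Your proposal has a genuine directional gap. You want to factor $(W,\Sigma)$ into elementary pieces and bound, for each piece, how much $\Ord_V$ can \emph{increase} as you move from source to target. But the first inequality in the theorem bounds $\Ord_V$ of the \emph{source} $(Y_0,K_0)$ in terms of the \emph{target} $(Y_1,K_1)$, not the other way around. A statement like ``a death raises $\Ord_V$ by at most $1$'' would give you $\Ord_V(Y_1,K_1)\le \Ord_V(Y_0,K_0)+(\text{stuff})$, which is the wrong direction. The split-injection input from Theorem~\ref{Main Theorem} does go the right way, but it only applies when the surface is an annulus; once you allow births, saddles, and deaths you no longer have a split injection, and you have given no mechanism for reading the inequality backwards through those handles. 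The $\max\{M,\Ord_V(Y_1,K_1)\}$ shape of the bound is also a hint that it is not obtained by summing per-handle contributions.

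The paper's argument is structurally different and avoids this problem by doubling. One first proves the identity
\[
V^{M}\cdot F_{D(W),D(\cF),D(\frs)} \;\simeq\; V^{\,b-m}\cdot \id_{\cHFL^-(Y_0,K_0,\frs|_{Y_0})},
\]
where $b$ is the number of saddles, so $b-m=M+2g(\Sigma)$ (this is Proposition~\ref{prop: computation of ribbon link cobordisms}). The geometric input is Lemma~\ref{lemma: adding a tube}: in the doubled picture, each death together with its dual birth can be tubed off at the cost of a factor of $V$, and the remaining genus-creating saddle/dual-saddle pairs account for $V^{b-m}$; what is left is the double of a ribbon $\Z$-homology \emph{concordance}, which induces the identity by Proposition~\ref{prop: trivial concordance in W}. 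From the displayed identity the torsion bound drops out exactly as in \cite{JMZ_TorsionOrder}: for $V$-torsion $x$, the element $F_{W,\cF}(x)$ is again $V$-torsion, so $V^{\max\{M,\Ord_V(Y_1,K_1)\}}F_{\overline W,\overline\cF}(F_{W,\cF}(x))=0$, whence $V^{\max\{M,\Ord_V(Y_1,K_1)\}+2g(\Sigma)}x=0$. The second inequality then follows by the same argument applied to the reversed cobordism, which is only available when $W$ is a product. The doubling is precisely what lets you read the bound in the correct direction; your inductive per-handle scheme does not supply a substitute for it.
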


We use Theorem \ref{theorem: tors bounds} to prove some results about ribbon cobordisms between knots in homology cobordant 3-manifolds, and consider some generalizations of the fusion number in the context of ribbon homology cobordisms. 

\subsection{Acknowledgements} 
I would like to thank my advisor Robert Lipshitz for his guidance and many suggestions. I would also like to thank Tye Lidman, Maggie Miller, and Ian Zemke for helpful comments. 

\section{Background}\label{Section: Background}

\subsection{The link Floer TQFT}\label{Subsection: Link Floer TQFT}

Knot Floer homology is an invariant of knots in 3-manifolds defined by Ozsváth and Szabó \cite{os_knotinvts} and independently Rasmussen \cite{rasmussen_knotcompl}. The extension to links is due to Ozsváth and Szabó in \cite{os_linkinvts}. We review the definitions in order to establish the conventions we will be following.

\begin{defn}\label{def: mutibased link}
A \textit{multi-based link} $\mathbb{L} = (L, \w, \z)$ in a 3-manifold $Y$ is an oriented link $L$ with two collections of basepoints $\w$ and $\z$ such that each component of $L$ has at least one $\w$- and one $\z$- basepoint and the basepoints alternate between $\w$ and $\z$ as one travels along the link.
\end{defn}

The link Floer groups are constructed by choosing a multi-pointed Heegaard diagram $(\Sigma, \bal, \bbe, \w, \z)$ for $(Y, \L)$, where $\bal = (\al_1, ..., \al_{g+n-1})$ and $\bbe = (\be_1, ..., \be_{g+n-1})$ are the attaching curves, $g$ is the genus of $\Sigma$, and $n = |\w| = |\z|$. Denote by $\T_\al$ and $\T_\be$ the half dimensional tori $\al_1 \times ... \times \al_{g+n-1}$ and $\be_1 \times ... \times \be_{g+n-1}$ in $\Sym^{g+n-1}(\Sigma)$. The link Floer complex splits over $\SpinC$-structures for $Y$ and is generated by intersection points in $\T_\al \cap \T_\be$. In \cite{os_holodisks}, Ozsváth and Szabó define a map 
\[
\frs_\w: \T_\al \cap \T_\be \ra \SpinC(Y)
\]
by interpreting $\SpinC$-structures on $Y$ as homology classes of non-vanishing vector fields on $Y$ (in the sense of \cite{Turaev_Tors_Inv_SpinC}); once we choose a Morse function inducing our Heegaard splitting, an intersection point $\x$ determines flowlines from the index 1 to index 2 critical points and the $\w$ basepoints determine flowlines connecting the index 0 and 3 critical points. Outside a neighborhood of these flowlines, the gradient vector field is non-vanishing, and this homology class is defined to be the $\SpinC$-structure associated to the intersection point $\x$.

Define $\cCFL^-(Y, \frs)$ to be the free $\F_2[U, V]$-module generated by intersection points $\x$ in $\T_\al\cap\T_\be$ with $\frs_\w(\x) = \frs.$ The differential is defined by counting holomorphic disks of Maslov index 1: let

\[
\partial(\x) = \sum_{\y\in \T_\al\cap \T_\be}\sum_{\substack{\phi \in \pi_2(\x, \y),\\ \mu(\phi) =1}} \# \widehat{\mathcal{M}}(\phi)U^{n_\w(\phi)}V^{n_\z(\phi)}\y,
\]
and extend $\F_2[U, V]$-linearly. Note, $\cCFL^-(Y, \frs)$ could also have been defined as generated by intersection points $\x$ with $\frs_\z(\x) = \frs.$ By \cite[Lemma 3.3]{zemke_linkcob}, $\frs_\w(\x) - \frs_\z(\x) = PD[L]$, where $[L]$ is the fundamental class of the link. Hence, when the homology class of the link is trivial in $H_1(Y; \Z)$, the maps $\frs_w$ and $\frs_z$ agree, so either choice yields the same complex. However, for links which are \emph{not} nullhomologous, the two complexes may differ. For a more general set up, see \cite[Section 3]{zemke_linkcob}. \\

Maps between link Floer complexes are induced by decorated link cobordisms.

\begin{defn}\label{Def: decorated link cob}
A \textit{decorated link cobordism} from $(Y_0, \L_0) = (Y_0, (L_0, \w_0, \z_0))$ to $(Y_1, \L_1) = (Y_1, (L_1, \w_1, \z_1))$ is a pair $(W, \cF) = (W, (\Sigma, \cA))$ with the following properties:
\begin{enumerate}
	\item $W$ is an oriented cobordism from $Y_0$ to $Y_1$
	\item $\Sigma$ is an oriented surface in $W$ with $\partial \Sigma = -L_0 \cup L_1$
	\item $\cA$ is a properly embedded 1-manifold in $\Sigma$, dividing it into subsurfaces $\Sigma_\w$ and $\Sigma_\z$ such that $\w_0, \w_1 \sub \Sigma_\w$ and $\z_0,\z_1 \sub \Sigma_\z.$
\end{enumerate}
\end{defn}
In \cite{zemke_linkcob}, it is shown that a decorated link cobordism $(W, \cF)$ from $(Y_0, \L_0)$ to $(Y_1, \L_1)$ and a $\SpinC$-structure $\frs$ on $W$, give rise to a map
\[
F_{W, \cF, \frs}: \cCFL^-(Y_0, \L_0, \frs|_{Y_0}) \ra \cCFL^-(Y_1, \L_1, \frs|_{Y_1}),
\]
and these maps are functorial \cite[Theorem B]{zemke_linkcob} in the following sense: 
\begin{enumerate}
\item Let $(W, \cF)$ be the trivial link cobordism, i.e. $W = Y\times [0, 1]$, $\Sigma = L\times [0, 1]$ and $\cA$ is a collection of arcs $\textbf{p}\times [0, 1]$ where $\textbf{p} \sub L-(\w\cup \z)$ and consists of exactly one point in each component of $L-(\w\cup \z)$. Then $F_{W, \cF, \frs} = \id_{\cCFL^-(Y, \L, \frs|_{Y})}$.
\item If $(W, \cF)$ can be decomposed into the union of two decorated link cobordisms $(W_1, \cF_1) \cup (W_2, \cF_2)$ and $\frs_1$ and $\frs_2$ are $\SpinC$-structures on $W_1$ and $W_2$ respectively which agree on their common boundary, then
\[
F_{W_2, \cF_2, \frs_2}\circ F_{W_1, \cF_1, \frs_1} \simeq \sum_{\substack{\frs \in \SpinC(W),\\ \frs|_{W_i} = \frs_i}}F_{W,\cF, \frs}.
\]

\end{enumerate}

The decorated link cobordism maps are defined as compositions of maps associated to handle attachments to the embedded surfaces and to the ambient 4-manifold. \\

In general, it is quite difficult to compute the decorated link cobordism maps. In some simple cases, however, the link cobordism maps can be computed in terms of the graph cobordism maps defined in \cite{zemke_graphcob}. If $(Y_0, \w_0)$ and $(Y_1, \w_1)$ are 3-manifolds with a collection of basepoints $\w_0$ and $\w_1$, a ribbon graph cobordism between them is a pair $(W, \Gamma)$ such that $W$ is a cobordism from $Y_0$ to $Y_1$ and $\Gamma$ is a graph embedded in $W$ with the properties that $\Gamma \cap Y_i = \w_i$, each basepoint $\w_i$ has valence 1 in $\Gamma$, and at each vertex, the edges of $\Gamma$ are given a cyclic ordering. A ribbon graph cobordism $(W, \Gamma): (Y_0, \w_0) \ra (Y_1, \w_1)$ gives rise to two maps:
\[
F_{W, \Gamma, \frs}^A,F_{W, \Gamma, \frs}^B : CF^-(Y_0, \w_0, \frs|_{Y_0}) \ra CF^-(Y_1, \w_1, \frs|_{Y_1}).
\]
These two maps satisfy
\[
F_{W, \Gamma, \frs}^A \simeq F_{W, \overline{\Gamma}, \frs}^B,
\]
where $\overline{\Gamma}$ is the graph obtained by reversing the cyclic ordering at each of the vertices. The map $F_{W, \Gamma, \frs}^A$ depends on the interaction between the graph and the $\bal$-curves while $F_{W, \overline{\Gamma}, \frs}^B$ depends on the interaction of the graph and the $\bbe$-curves. When $\Gamma$ is simply a path, these maps agree with the original cobordism maps defined by Ozsváth and Szabó \cite[Theorem B]{zemke_graphcob}. 

The graph cobordism maps encode the action of $\Lambda^* H_1(Y)/\tors$ on the Heegaard Floer complexes. Recall that, given a closed loop $\gamma \sub Y$, the action of $[\gamma]\in H_1(Y)/\tors$ on $HF^-(Y, \w)$ is induced by a map
\[
A_\gamma: CF^-(Y, \w) \ra CF^-(Y, \w)
\]
defined bv
\[
A_\gamma(\x) = \sum_{\y\in \T_\al \cap \T_\be} \sum_{\substack{\phi \in \pi_2(\x, \y),\\ \mu(\phi) = 1}} a(\gamma, \phi) \# \widehat{\mathcal{M}}(\phi) U^{n_\w(\phi)}\y.
\]
Roughly speaking, the quantity $a(\gamma, \phi)$ is the intersection number of $\gamma$ and the portion of the boundary of a domain for $\phi$ which lies on an $\bal$-curve. This map satisfies $A_\gamma^2 \simeq 0$ and can be realized by the graph cobordism $(Y \times [0, 1],  \Gamma)$, where the graph $\Gamma$ is shown in Figure \ref{Fig: Graph Action}. 

\begin{figure}
\includegraphics[scale=.12]{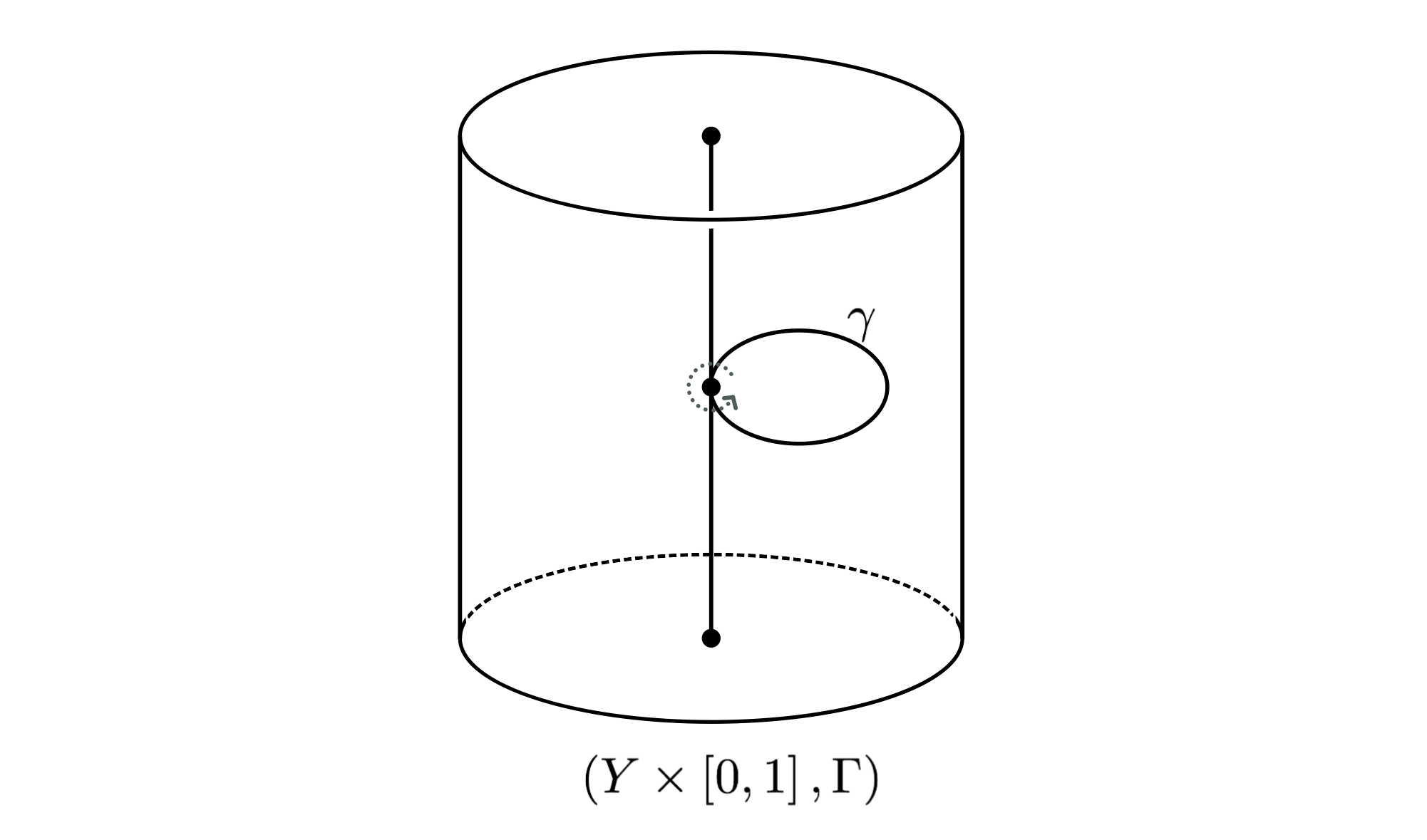}
	\caption{The graph $\Gamma$ realizing the action of a closed curve $\gamma$ in $Y$. The cyclic ordering is indicated by the dashed arrow.}
		\label{Fig: Graph Action}
\end{figure}

If $(W, \cF): (Y_0, \L_0) \ra (Y_1, \L_1)$ is a decorated link cobordism and $\Gamma \sub \Sigma$ is a ribbon graph, we say that $\Gamma$ is the \textit{ribbon 1-skeleton} of $\Sigma_\w$ if $\Gamma \sub W$, $\Gamma \cap Y_i = \w_i$, $\Sigma_\w$ is a regular neighborhood of $\Gamma$ in $\Sigma$, and the cyclic orders of $\Gamma$ agree with the orientation of $\Sigma$. A \textit{ribbon 1-skeleton} of $\Sigma_\z$ is defined in exactly the same way. There are natural chain isomorphisms
\[
\cCFL^-(Y, \L, \frs) \otimes_{\F_2[U, V]} \F_2[U, V]/(V-1) \cong \CF^-(Y, \w, \frt)
\]
and
\[
\cCFL^-(Y, \L, \frs) \otimes_{\F_2[U, V]} \F_2[U, V]/(U-1) \cong \CF^-(Y, \z, \frt-PD[L]).
\]
Under these identifications, a link cobordism map $F_{W, \cF, \frs}$ induces two maps on $\CF^-(Y)$, denoted 
\[
F_{W, \cF, \frs}|_{U=1} \hspace{1cm} \text{ and } \hspace{1cm} F_{W, \cF, \frs}|_{V=1}.
\]
These maps agree with the maps induced by the graph cobordism maps associated to the ribbon 1-skeletons of $\Sigma$.

\begin{theorem}{\cite[Theorem C]{zemke_linkcob}}\label{Thm: Reduction to Graph Cob Maps}
If $(W, \cF)$ is a decorated link cobordism, and $\Gamma_\w \sub \Sigma_\w$ and $\Gamma_\z \sub \Sigma_\z$ are ribbon 1-skeleta, then 
\[
F_{W, \cF, \frs} |_{U=1} \simeq F_{W, \Gamma_\z, \frs - PD[\Sigma]}^A
\]
and
\[
F_{W, \cF, \frs} |_{V=1} \simeq F_{W, \Gamma_\w, \frs}^B,
\]
under the identifications above.
\end{theorem}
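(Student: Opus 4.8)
This is Zemke's Theorem~C, proved in \cite{zemke_linkcob}; here is the line of argument one would follow. The key observation is that the specialization $V=1$ has a transparent geometric meaning: quotienting $\cCFL^-(Y,\L,\frs)$ by $(V-1)$ forgets the $\z$-basepoints, leaving the based three-manifold $(Y,\w)$ and its complex $\CF^-(Y,\w)$. Under this operation a decorated link cobordism $(W,\cF)=(W,(\Sigma,\cA))$ loses the subsurface $\Sigma_\z$ and retains the $\w$-side, which deformation retracts onto the ribbon graph $\Gamma_\w$; so the assertion is that Zemke's two a priori distinct constructions --- the link cobordism map of \cite{zemke_linkcob} and the ribbon graph cobordism map of \cite{zemke_graphcob} --- compute the same map once the $\z$-data has been discarded. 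The $U=1$ statement is the mirror image, discarding $\w$ and retaining $\Gamma_\z$ and the ``type $A$'' graph map; the shift $\frs\mapsto\frs-PD[\Sigma]$ is the cobordism-level avatar of the relation $\frs_\w-\frs_\z=PD[L]$ noted earlier.

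The plan is to verify the identity piece by piece on a decomposition of $(W,\Sigma)$ into elementary link cobordisms. This suffices because both $F_{W,\cF,\frs}$ and $F^B_{W,\Gamma_\w,\frs}$ are invariants --- of the decorated link cobordism and of the ribbon graph cobordism, respectively --- so that a comparison made on one convenient decomposition propagates to all of them; establishing these invariance statements is itself the heaviest structural input, carried out in \cite{zemke_linkcob} and \cite{zemke_graphcob}. Fixing a Morse function compatible with $\Sigma$ and subordinate to $\cF$, the elementary pieces are: four-dimensional handles of index $1$, $2$, and $3$ attached away from $\Sigma$; births and deaths of unknotted components of $\Sigma$ carrying a single dividing arc; saddle (band) moves; and basepoint-moving moves. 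One then checks: (i) the four-dimensional handle maps are, by construction, the $\F_2[U,V]$-linear extensions of the Heegaard Floer handle maps, which are precisely the graph-cobordism handle maps, so they match at $V=1$ on the nose; (ii) a birth or death of an unknotted component with a dividing arc is essentially a quasi-stabilization $S^\pm_{w,z}$, and the known formula for $S^\pm_{w,z}$ shows that at $V=1$ it becomes the free stabilization $S^\pm_w$ of the graph calculus, up to a basepoint-moving map which the graph side also carries; (iii) the saddle maps are the main computation --- after setting $V=1$, a band in $\Sigma_\w$ reproduces the ``edge operation'' of $\Gamma_\w$ (assembled from relative homology maps $A_\gamma$ and basepoint-moving maps $\Phi$ of its core and boundary curves), while a band in $\Sigma_\z$ degenerates to a map the graph calculus reproduces as well; (iv) the basepoint-moving maps on the two sides agree by their defining formulas. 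Assembling these: the regular-neighborhood presentation of $\Sigma_\w$ over $\Gamma_\w$ --- disk neighborhoods of the vertices, thickened edges --- mirrors the presentation of $\Gamma_\w$ as a composition of $\w$-basepoint births and edge additions, so the termwise matching of (i)--(iv) composes to the claimed chain homotopy $F_{W,\cF,\frs}|_{V=1}\simeq F^B_{W,\Gamma_\w,\frs}$.

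The hard part is step (iii). The saddle maps over $\F_2[U,V]$ are defined by counts of holomorphic triangles weighted by $\w$- and $\z$-multiplicities; isolating how this count behaves after setting $V=1$, and then recognizing the result inside the ribbon graph cobordism calculus --- which repackages the same information through the less transparent language of relative homology and basepoint-moving maps --- requires the full set of relations among elementary link cobordism maps (commutation of disjoint bands, $\Phi^2\simeq 0$, the interaction of quasi-stabilizations with bands, and so on). A secondary point is independence of the choice of ribbon $1$-skeleton $\Gamma_\w$: this again follows from the invariance statements, but must be invoked with care so that a verification on a single decomposition of $(W,\Sigma)$ yields the statement for all $\Gamma_\w$ at once.
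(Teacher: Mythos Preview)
The paper does not prove this statement: it is quoted verbatim as \cite[Theorem C]{zemke_linkcob} and used as a black box, with no accompanying proof or sketch. Your proposal correctly identifies this at the outset and then offers an outline of the argument one would find in Zemke's original paper; there is nothing in the present paper to compare it against. As a sketch of Zemke's proof strategy your outline is reasonable --- decompose into elementary pieces, match the handle/birth/death/band maps termwise after specializing $V=1$, and rely on the invariance theorems of \cite{zemke_linkcob} and \cite{zemke_graphcob} to make a single decomposition suffice --- but since the paper itself supplies no argument, there is no meaningful comparison to draw here.
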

For a full discussion on Zemke's graph TQFT framework, see \cite{zemke_graphcob} or, for an overview, see \cite[Section 9.2]{zemke_absgr}.\\

In the following situation, the decorated link cobordism maps are determined by the corresponding graph cobordism map. Let $\cF$ be a closed surface in $W: Y_0 \ra Y_1$, which is decorated by $\cA$, as in Definition \ref{Def: decorated link cob}. Choose disjoint disks $D_0$ and $D_1$ in $\cF$ which each intersect $\cA$ in a single arc, and perturb $\cF$ so that it intersects $Y_i$ in $D_i$. Remove each $D_i$, leaving a decorated cobordism $\cF_0$ between doubly pointed unknots $\U_1$ and $\U_2$. Let $p_i$ denote the center of the disk $D_i$. Identify $\cCFL^-(Y_i, \U_i, \frs)$ with $\CF^-(Y_i, p_i, \frs) \otimes_{\F[W]}  \F[U, V]$, where $W$ acts on $\F[U, V]$ as $UV$. Under this identification, a graph cobordism map $F_{W, \Gamma, \frs}$ induces a map $\cCFL^-(Y_0, \U_0, \frs|_{Y_0}) \ra \cCFL^-(Y_1, \U_1, \frs|_{Y_1})$, which we write as $F_{W, \Gamma, \frs}|^{\F_2[U, V]}.$ In this case, the link cobordism map induced by $(W, \cF_0): (Y_0, \U_0) \ra (Y_1, \U_1)$ is relatively simple. 

\begin{prop}{\cite[Proposition 9.7]{zemke_absgr}}
\label{Prop: Compute closed link cob in terms of graph cob}
Let $\cF = (\Sigma, \cA)$ be a closed decorated link cobordism, and let $(W, \cF_0)$ be the link cobordism obtained from $\cF$ by the procedure outlined above. Define $\Delta A$ to be 
\[
\dfrac{\langle c_1(\frs), \Sigma \rangle - [\Sigma] \cdot [\Sigma]}{2} + \dfrac{\chi(\Sigma_w) - \chi(\Sigma_z)}{2}.
\]
Then, 
\[
F_{W, \cF_0, \frs} \simeq 
	\begin{cases}
		V^{\Delta A} \cdot F_{W, \Gamma_\w, \frs}^B|^{\F[U, V]} & \Delta A \ge 0 \\
		U^{-\Delta A} \cdot F_{W, \Gamma_\z, \frs - PD[\Sigma]}^A|^{\F[U, V]} & \Delta A \le 0,
	\end{cases}
\]
where $\Gamma_w$ and $\Gamma_z$ are ribbon 1-skeleta. 
\end{prop}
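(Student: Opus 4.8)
The plan is to deduce the proposition from the reduction to graph cobordism maps (Theorem~\ref{Thm: Reduction to Graph Cob Maps}) together with Zemke's absolute Alexander grading formula; all of the geometric content sits in those two inputs. Write $G := F_{W,\cF_0,\frs}$, viewed as an $\F_2[U,V]$-equivariant chain map
\[
G\colon \cCFL^-(Y_0,\U_0,\frs|_{Y_0})\ra \cCFL^-(Y_1,\U_1,\frs|_{Y_1}).
\]
First I would record the shape of the source and target: under the identification $\cCFL^-(Y_i,\U_i,\frs)\cong \CF^-(Y_i,p_i,\frs)\otimes_{\F_2[W]}\F_2[U,V]$ with $W\mapsto UV$, each is a free $\F_2[U,V]$-module on its intersection points, and because the induced differential involves only the monomial $UV$, all of these generators sit in a single Alexander grading, which I normalize to $0$. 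Next I would invoke Zemke's grading formula to conclude that $G$ is homogeneous for the Alexander grading with shift exactly the quantity $\Delta A$ from the statement: the four-dimensional terms ($c_1(\frs)^2$, $\sigma(W)$, $\chi(W)$) occurring in the $\mathrm{gr}_w$- and $\mathrm{gr}_z$-shifts cancel in $A=\tfrac12(\mathrm{gr}_w-\mathrm{gr}_z)$, leaving $\tfrac12\big(\langle c_1(\frs),\Sigma\rangle-[\Sigma]\cdot[\Sigma]\big)+\tfrac12\big(\chi(\Sigma_w)-\chi(\Sigma_z)\big)$.

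Granting this, the remainder is algebra. Suppose $\Delta A\ge 0$. In the bases of intersection points every matrix entry of $G$ is an Alexander-homogeneous polynomial of degree $\Delta A$; since $U$ and $V$ have Alexander gradings $-1$ and $+1$, each monomial has the form $U^aV^{a+\Delta A}$ and so is divisible by $V^{\Delta A}$. Hence $G=V^{\Delta A}\cdot H$ identically, where $H$ is $\F_2[U,V]$-equivariant of Alexander degree $0$; since $V$ acts injectively on these free complexes and commutes with the differentials, $H$ is a chain map. Setting $V=1$ and applying Theorem~\ref{Thm: Reduction to Graph Cob Maps} to $(W,\cF_0)$ gives $H|_{V=1}=G|_{V=1}\simeq F^B_{W,\Gamma_w,\frs}$. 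To upgrade this over $\F_2[U,V]$ I would use the elementary fact that an Alexander-degree-$0$, $\F_2[U,V]$-equivariant map between complexes generated in Alexander grading $0$ has all matrix entries in the subring $\F_2[W]$, $W=UV$, hence is of the form $K_0\otimes_{\F_2[W]}\F_2[U,V]$, and that an $\F_2[W]$-equivariant null-homotopy for $K_0$ obtained after specializing $V=1$ tensors back up. Applied to $H-F^B_{W,\Gamma_w,\frs}|^{\F_2[U,V]}$ this gives $H\simeq F^B_{W,\Gamma_w,\frs}|^{\F_2[U,V]}$, hence $G\simeq V^{\Delta A}\cdot F^B_{W,\Gamma_w,\frs}|^{\F_2[U,V]}$. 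The case $\Delta A\le 0$ is symmetric with $U$ and $V$ exchanged: $G=U^{-\Delta A}\cdot H'$ with $H'$ of Alexander degree $0$, and Theorem~\ref{Thm: Reduction to Graph Cob Maps} supplies $H'|_{U=1}=G|_{U=1}\simeq F^A_{W,\Gamma_z,\frs-PD[\Sigma]}$, so $G\simeq U^{-\Delta A}\cdot F^A_{W,\Gamma_z,\frs-PD[\Sigma]}|^{\F_2[U,V]}$. (When $\Delta A=0$ both formulas hold, consistently with $F^A_{W,\Gamma,\frs}\simeq F^B_{W,\overline{\Gamma},\frs}$.)

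The main obstacle is the grading bookkeeping: one must know both that $F_{W,\cF_0,\frs}$ is Alexander-homogeneous and that its shift is exactly $\Delta A$, which rests on Zemke's absolute grading formula for decorated link cobordism maps and a careful cancellation of the ambient four-manifold contributions. Once that is in place and Theorem~\ref{Thm: Reduction to Graph Cob Maps} is used to pin down the two specializations, the divisibility and lifting steps are routine. A minor point to verify is that the ribbon $1$-skeleta appearing here may be taken to be ribbon $1$-skeleta $\Gamma_w\subset\Sigma_w$ and $\Gamma_z\subset\Sigma_z$ meeting $Y_i$ in $p_i$, so that the graph cobordism maps named in the statement are precisely those delivered by Theorem~\ref{Thm: Reduction to Graph Cob Maps} for $(W,\cF_0)$.
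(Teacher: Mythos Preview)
The paper does not give its own proof of this proposition: it is quoted verbatim from \cite[Proposition~9.7]{zemke_absgr} and used as a black box. So there is no ``paper's proof'' to compare against; the relevant comparison is with Zemke's original argument.

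Your outline is essentially the argument one expects (and is what Zemke does): use the absolute Alexander grading formula to see that $F_{W,\cF_0,\frs}$ is Alexander-homogeneous of degree $\Delta A$, extract the resulting power of $U$ or $V$, and then invoke Theorem~\ref{Thm: Reduction to Graph Cob Maps} to identify the degree-$0$ remainder with the appropriate graph cobordism map tensored up to $\F_2[U,V]$. Two comments. First, the grading input you cite \emph{is} the substantive content of the cited proposition, so you are not really giving an independent proof so much as indicating why the statement is plausible given Zemke's grading package; that is fine, but worth being explicit about. Second, your ``tensors back up'' step can be said more cleanly: both $H$ and $F^B_{W,\Gamma_\w,\frs}|^{\F_2[U,V]}$ are of the form $K\otimes_{\F_2[W]}\F_2[U,V]$ for $\F_2[W]$-linear maps $K$ between the complexes $\CF^-(Y_i,p_i)$, and specializing $V=1$ is the ring isomorphism $\F_2[W]\cong\F_2[U]$; hence a chain homotopy over $\F_2[U]$ is literally a chain homotopy over $\F_2[W]$, which then tensors up. With that phrasing there is nothing to check about the homotopy being Alexander-homogeneous. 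The remark about choosing $\Gamma_\w,\Gamma_\z$ to meet $Y_i$ in $p_i$ is correct and is exactly how the paper sets things up in the paragraph preceding the proposition.
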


It will also be useful to understand how the link cobordism maps change under surgery operations. If $(W, \cF)$ is a link cobordism and $\gamma$ is a closed curve in $\cA$, we can simultaneously do surgery on $\gamma$ in $W$ and $\Sigma$ to obtain a new link cobordism $(W(\gamma), \cF(\gamma))$, i.e. remove a regular neighborhood of $\gamma \sub (W, \Sigma)$, which can can identified with $(S^1\times D^3, S^1 \times D^1)$ and replace it with $(D^2\times S^2, D^2 \times S^0)$. The surface obtained by surgery on $\gamma$ naturally inherits a decoration $\cA(\gamma)$, so denote the new decorated surface $\cF(\gamma) = (\Sigma(\gamma), \cA(\gamma))$ (see Figure \ref{Fig: Surgery Decorations}). If the curve $\gamma$ represents a non-divisible element of $H_1(W; \Z)$ then
\[
F_{W, \cF, \frs} \simeq F_{W(\gamma), \cF(\gamma), \frs(\gamma)}
\]
by \cite[Proposition 5.4]{zemke_connectedsums}. The assumption that $[\gamma]$ is non-divisible guarantees that there is a is the unique $\SpinC$-structure $\frs(\gamma)$ on $W(\gamma)$ which extends a given $\SpinC$-structure on $W-N(\gamma)$.  An analogous result holds for surgeries on collections of curves $\gamma_1, ..., \gamma_n$ which will be of use in the proof of Theorem \ref{Main Theorem}.

\begin{prop}
\label{prop: surgery preserves link cob map}
Let $(W, \cF)$ be a link cobordism. Let $\gamma_1, ..., \gamma_n$ be closed curves in $\cA$ and let $(W(\gamma_1, ..., \gamma_n), \cF(\gamma_1, ..., \gamma_n))$ be the surgered link cobordism. If the restriction map $H^1(W - \amalg N(\gamma_i)) \ra H^1(\amalg \partial N(\gamma_i))$ is surjective, then there is a unique $\SpinC$-structure $\frs(\gamma_1, ..., \gamma_n)$ extending $\frs|_{W - \amalg N(\gamma_i)}$ for each $\frs \in \SpinC(W)$ and
\[
F_{W, \cF, \frs} \simeq F_{W(\gamma_1, ..., \gamma_n), \cF(\gamma_1, ..., \gamma_n), \frs(\gamma_1, ..., \gamma_n)}.
\]
\end{prop}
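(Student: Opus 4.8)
The plan is to induct on $n$, surgering one curve at a time and invoking at each step the single-curve formula $F_{W,\cF,\frs}\simeq F_{W(\gamma),\cF(\gamma),\frs(\gamma)}$ recorded just above (\cite[Proposition 5.4]{zemke_connectedsums}), which is the case $n=1$. Write $X := W\sm\amalg_{i}N(\gamma_i)$ for the common complement; then $W$, every intermediate surgered manifold, and $W(\gamma_1,\dots,\gamma_n)$ are all built from $X$ by gluing copies of $S^1\times D^3$ or $D^2\times S^2$ along the boundary tori $\partial N(\gamma_i)\cong S^1\times S^2$. The two points I would need to supply are (i) that the single-curve hypothesis holds for the curve surgered at each step, and (ii) that the surjectivity hypothesis is inherited by the surgered cobordism with respect to the remaining curves.

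For (i): since $H^1(X)$ surjects onto $\bigoplus_i H^1(\partial N(\gamma_i))$, a given class on $\partial N(\gamma_1)$ lifts to a class on $X$ vanishing on $\partial N(\gamma_j)$ for $j\ne1$; as $H^1(S^1\times D^3)\to H^1(S^1\times S^2)$ is an isomorphism, a Mayer--Vietoris argument extends this class (by zero) over each $N(\gamma_j)$, $j\ne1$, producing a class on $W\sm N(\gamma_1)$ with the prescribed restriction. Thus $H^1(W\sm N(\gamma_1))\to H^1(\partial N(\gamma_1))$ is surjective, which is the condition under which \cite[Proposition 5.4]{zemke_connectedsums} applies to $\gamma_1$: it gives $(W(\gamma_1),\cF(\gamma_1))$, the unique $\SpinC$-structure $\frs(\gamma_1)$ on $W(\gamma_1)$ restricting to $\frs|_{W\sm N(\gamma_1)}$, and $F_{W,\cF,\frs}\simeq F_{W(\gamma_1),\cF(\gamma_1),\frs(\gamma_1)}$. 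For (ii): the curves $\gamma_2,\dots,\gamma_n\sub\cA(\gamma_1)$ are disjoint from the surgery region, and a Mayer--Vietoris computation for $W(\gamma_1)\sm\amalg_{i\ge2}N(\gamma_i)=X\cup_{S^1\times S^2}(D^2\times S^2)$ --- using $H^1(D^2\times S^2)=0$ and connectedness of $D^2\times S^2$ --- identifies $H^1\big(W(\gamma_1)\sm\amalg_{i\ge2}N(\gamma_i)\big)$ with $\ker\big(H^1(X)\to H^1(\partial N(\gamma_1))\big)$. A target class on $\amalg_{i\ge2}\partial N(\gamma_i)$ then lifts, by the original hypothesis, to a class on $X$ that also vanishes on $\partial N(\gamma_1)$, hence lies in this kernel and restricts correctly; so the surjectivity hypothesis holds in $W(\gamma_1)$, the inductive hypothesis applies to $(W(\gamma_1),\cF(\gamma_1))$ with $\gamma_2,\dots,\gamma_n$, and composing the two equivalences yields the formula. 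Finally, the resulting $\SpinC$-structure $\frs(\gamma_1,\dots,\gamma_n)$ restricts to $\frs|_X$ and is the unique $\SpinC$-structure on $W(\gamma_1,\dots,\gamma_n)$ doing so: two such structures agree on every $\partial N(\gamma_i)\cong S^1\times S^2$, hence on the glued-in $D^2\times S^2$ pieces since $\SpinC(D^2\times S^2)\to\SpinC(S^1\times S^2)$ is injective (it is induced by the isomorphism on $H^2$), hence on all of $W(\gamma_1,\dots,\gamma_n)=X\cup\amalg_i(D^2\times S^2)$.

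I expect the main obstacle to be the bookkeeping in (ii): confirming that the cohomological surjectivity hypothesis is genuinely preserved by each surgery --- so that \cite[Proposition 5.4]{zemke_connectedsums} can be reapplied --- and that the $\SpinC$-structures produced at successive stages are mutually compatible and assemble into the asserted unique global extension. The rest is a routine iteration of the single-curve case.
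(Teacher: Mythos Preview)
Your inductive argument is correct: both the verification in (i) that the single-curve surjectivity condition holds for $\gamma_1$ in $W$, and the Mayer--Vietoris computation in (ii) showing that the hypothesis descends to $W(\gamma_1)$ with the remaining curves, go through as you describe. The final uniqueness check for the global $\SpinC$-structure is also fine.

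That said, the paper's proof takes a more direct route and avoids the induction entirely. Rather than surgering one curve at a time, it decomposes $(W,\cF)$ all at once as the composition of $(W\sm\amalg N(\gamma_i),\cF\sm\amalg N(\gamma_i))$ with the $n$ local pieces $(N(\gamma_i),M_i)$, and then invokes Zemke's local computation that each $(N(\gamma_i),M_i)$ and $(D^2\times S^2,M_i')$ induce the same map. The $\SpinC$ bookkeeping is then a single Mayer--Vietoris argument: the gluing ambiguity for extending $\frs|_{W\sm\amalg N(\gamma_i)}$ over the $D^2\times S^2$ pieces lies in $\delta H^1(\amalg\partial N(\gamma_i))$, which vanishes precisely when the restriction map in the hypothesis is surjective. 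This is shorter because it sidesteps your steps (i) and (ii) --- there is no need to check that the surjectivity hypothesis is inherited under partial surgery, since nothing is ever partially surgered. Your approach, by contrast, has the virtue of reducing cleanly to the already-stated $n=1$ case, at the cost of the extra cohomological bookkeeping you correctly anticipated.
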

\begin{proof}
This is effectively proved in \cite[Proposition 5.4]{zemke_connectedsums}, so we only give a sketch. Decompose $(W, \cF)$ as $(W-\amalg N(\gamma_i), \cF-\amalg N(\gamma_i)) \circ (N(\gamma_1), M_1) \circ ... \circ (N(\gamma_n), M_n)$ where $M_i = \cF\cap N(\gamma_i)$ is a tube in $N(\gamma_i)$ with boundary a two component unlink decorated as in Figure \ref{Fig: Surgery Decorations}. Surgery along $\gamma_i$ replaces $(N(\gamma_i), M_i)$ with $(D^2 \times S^2, M'_i)$ where $M'_i$ is a pair of disks each decorated with a single dividing arc (again, see Figure \ref{Fig: Surgery Decorations}). Zemke shows that $(N(\gamma_i), M_i)$ and $(S^2\times D^2, M_i')$ induce the same map on homology. The result follows from the composition law provided the restriction of any $\SpinC$-structure $\frs \in \SpinC(W)$ to $W-\amalg N(\gamma_i)$ extends uniquely to $W(\gamma_1, ..., \gamma_n)$. The restriction of $\frs$ to $\partial N(\gamma_i)$ is torsion, and there is a single $\SpinC$-structure on $D^2 \times S^2$ which is torsion on $\partial N(\gamma_i)$ which we can glue to $\frs|_{W-\amalg N(\gamma_i)}$. The ambiguity to gluing lies in $\delta H^1(\amalg \partial N(\gamma_i))$, which, by considering the Mayer Vietoris sequence, vanishes exactly when the connecting homomorphism is trivial, or equivalently, when the map $H^1(W - \amalg N(\gamma_i)) \ra H^1(\amalg N(\gamma_i))$ is surjective.
\end{proof}

\begin{figure}
\centering
	\includegraphics[scale=.18]{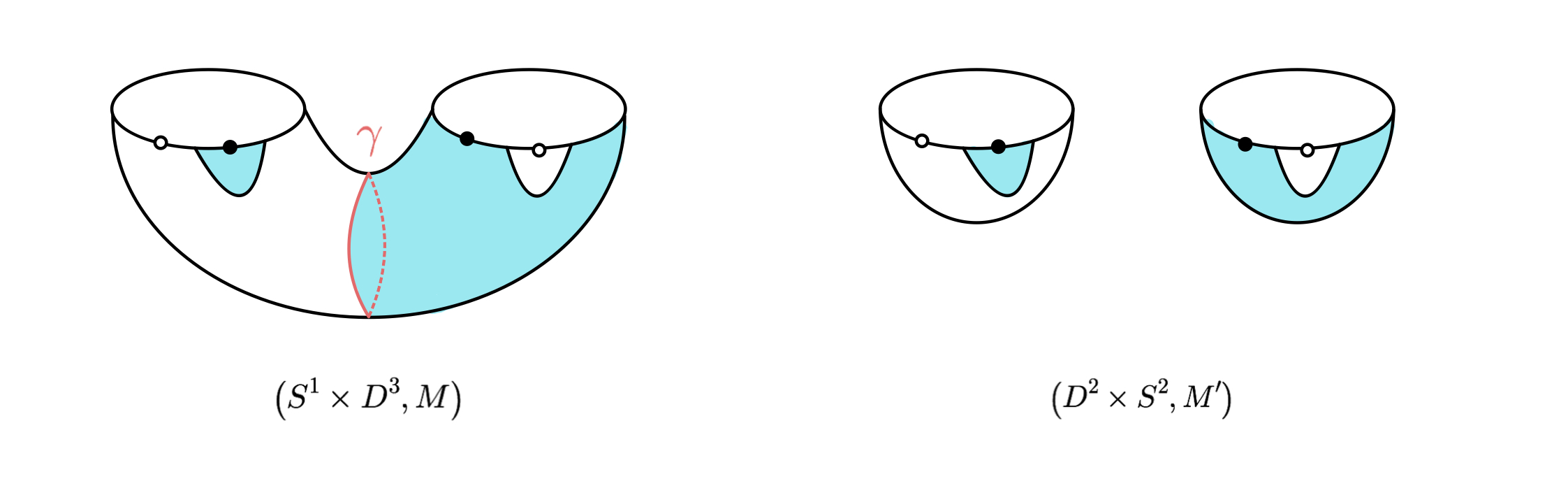}
		\caption{The decorated link cobordisms $(S^1\times D^3, M)$ (left) and $(D^2 \times S^2, M')$ (right). }
			\label{Fig: Surgery Decorations}
\end{figure}

A version of the ``sphere tubing" property of the link cobordism maps \cite[Lemma 3.1]{zemke_ribbon}, \cite[Lemma 4.2]{millerzemke_stronglyhomotopy} will be useful; if $(W, \cF)$ is a link cobordism in a homology cobordism and $S$ is a null-homologous 2-sphere embedded in the complement of the link cobordism, $S$ can be tubed to the embedded surface without changing the induced map.

\begin{prop}
\label{prop: sphere tubing}
Let $\cF = (\Sigma, \cA)$ be a decorated link cobordism in a homology cobordism $W$, and let $S \sub W$ be a smoothly embedded, nullhomologous sphere disjoint from $\cF$. Let $\cF'$ be a decorated cobordism obtained by connecting $\Sigma$ and $S$ by a tube whose feet are disjoint from $\cA$. Then,  \[ F_{W, \cF, \frs} \simeq F_{W, \cF', \frs}.\]
\end{prop}

\begin{proof}
Factor $F_{W, \cF, \frs}$ and $F_{W, \cF', \frs}$ through a regular neighborhood $N(S)$ of $S$. Since $S$ is nullhomologous, $N(S)$ can be identified with $D^2\times S^2$. $\cF'$ intersects $N(S)$ in a disk $D'$ and $\partial N(S)$ in an unknot. We can perturb $\cF$ so it meets $N(S)$ in a disk $D$ and $\partial N(S)$ in an unknot as well. Let $\cD$ and $\cD'$ be the disks $D$ and $D'$ decorated with a single dividing arc. Since $S$ is nullhomologous, the restriction of a given $\frs \in \SpinC(W)$ to $N(S)$ will be torsion. By \cite[Lemma 3.1]{zemke_ribbon}, $F_{N(S), \cD, \frs|_{N(S)}}$ does not depend on the choice of embedded disk, and so $F_{N(S), \cD, \frs|_{N(S))}} \simeq F_{N(S), \cD', \frs|_{N(S)}}$.

Moreover, since $W$ is a homology cobordism, the map $H^2(W) \ra H^2(\partial_-W)$ is an isomorphism, and, in particular, an injection. It follows from the following diagram that the map $H^2(W) \ra H^2(W - N(S))$ is injective as well.

\begin{center}
\begin{tikzcd}
H^2(W) \ar[r, "\cong"] \ar[d] &
	H^2(\partial_- W) \\

H^2(W-N(S)) \ar[ur]
\end{tikzcd}
\end{center}

Therefore, a given $\SpinC$-structure on $W - N(S)$ will extend over $N(S)$, and moreover will extend uniquely. By the composition law for the link cobordism maps, 

\begin{align*}
F_{W, \cF, \frs} &\simeq F_{W-N(S), \cF-N(S), \frs|_{W-N(S)}}\circ F_{N(S), \cD, \frs|_{N(S))}}\\
& \simeq F_{W-N(S), \cF-N(S), \frs|_{W-N(S)}}\circ F_{N(S), \cD', \frs|_{N(S))}} \\
&\simeq F_{W, \cF', \frs},
\end{align*}
as desired.
\end{proof}

Finally, recall in \cite[Proof of Theorem 3.1]{os_holotri}, Ozsváth and Szabó define an extended cobordism map:
\[
F_{W, \frs}: \Lambda^*H_1(W;\Z)/\tors\otimes\CF^-(Y_0, \frs|_{Y_0})\ra \CF^-(Y_1, \frs|_{Y_1}).
\]
A Heegaard triple $(\Sigma, \bal, \bbe, \bga)$ gives rise to a cobordism $X_{\alpha, \beta, \gamma}$. Since the natural map $H_1(\partial X_{\alpha, \beta, \gamma})\ra H_1(X_{\alpha, \beta, \gamma})$ is surjective, a given element $h \in H_1(X_{\alpha, \beta, \gamma})$ is in the image of some $(h_1, h_2, h_3) \in H_1(\partial X_{\alpha, \beta, \gamma})\cong H_1(Y_{\alpha, \beta}) \oplus H_1(Y_{\beta, \gamma})\oplus H_1(Y_{\alpha, \gamma})$. Then, by utilizing the $H_1/\tors$-action on $\partial X_{\alpha, \beta, \gamma}$, define a map 
\[
 \Lambda^*H_1(X_{\alpha, \beta, \gamma};\Z)/\tors\otimes\CF^-(Y_{\alpha, \beta}, \frs_{\alpha, \beta})\otimes\CF^-(Y_{\beta, \gamma}, \frs_{\beta, \gamma}) \ra \CF^-(Y_{\alpha, \gamma}, \frs_{\alpha, \gamma}), 
\]
by 
\[F_{\alpha, \beta, \gamma}(h\otimes \x\otimes \y) = F_{\alpha, \beta, \gamma}((h_1\cdot \x)\otimes \y ) + F_{\alpha, \beta, \gamma}(\x\otimes (h_2\cdot \y)) - h_3\cdot F_{\alpha, \beta, \gamma}(\x\otimes \y).\] 
This action induces a map on homology. By decomposing $W = W_1 \cup W_2 \cup W_3$ into the 1-, 2-, and 3-handle attachment cobordisms, the extended cobordism map is defined to be
\[
F_{W, \frs}(h \otimes \x) = F_{W_3, \frs}\circ F_{W_2, \frs}(h\otimes F_{W_1, \frs}(\x) ),
\]
This map satisfies a version of the usual $\SpinC$-composition law: 
\begin{prop}{\cite[Proposition 4.20]{os_holotri}}
If $W = W_1 \cup W_2$, and $\xi_1 \in \Lambda^*H_1(W_1; \Z)/\tors$ and $\xi_2 \in \Lambda^*H_1(W_2; \Z)/\tors$, then
\[
F_{W_2, \frs_2}(\xi_2 \otimes F_{W_1, \frs_1}(\xi_1 \otimes \cdot)) = \sum_{\substack{\frs \in \SpinC(W),\\ \frs|_{W_i} = \frs_i}} F_{W, \frs}((\xi_3 \otimes \cdot),
\]
where $\xi_3\in \Lambda^*H_1(W; \Z)/\tors$ is the image of $\xi_1 \otimes \xi_2$ under the natural map.
\end{prop}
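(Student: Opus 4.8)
The plan is to deduce this from the untwisted $\SpinC$-composition law for the Ozsv\'ath--Szab\'o cobordism maps together with the compatibility of the $\Lambda^*H_1(-;\Z)/\tors$-action with handle attachments. Fix self-indexing Morse functions on $W_1$ and $W_2$, giving decompositions $W_i = W_i^{(1)}\cup W_i^{(2)}\cup W_i^{(3)}$ into $1$-, $2$-, and $3$-handle cobordisms; concatenating and reordering handles (the extended cobordism map is independent of the handle decomposition) produces a handle decomposition of $W$ with all $1$-handles first, then all $2$-handles, then all $3$-handles, whose middle stage is the union of the $2$-handle cobordisms of $W_1$ and $W_2$. By definition $F_{W_i,\frs_i}(\xi_i\otimes\cdot)$ is the composite $F_{W_i^{(3)},\frs_i}\circ F_{W_i^{(2)},\frs_i}(\wt{\xi}_i\otimes F_{W_i^{(1)},\frs_i}(\cdot))$, where $\wt{\xi}_i$ is any lift of $\xi_i$ along $H_1(\partial X_i)\to H_1(X_i)\cong H_1(W_i)$ and the triangle map with insertion is given by the displayed formula $F_{\alpha,\beta,\gamma}((h_1\cdot\x)\otimes\y)+F_{\alpha,\beta,\gamma}(\x\otimes(h_2\cdot\y))-h_3\cdot F_{\alpha,\beta,\gamma}(\x\otimes\y)$.

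The heart of the argument is to push the two insertions onto a single triangle stage and combine them. Two facts are needed: (i) the $1$- and $3$-handle cobordism maps intertwine the $A_\gamma$-actions on their incoming and outgoing complexes up to chain homotopy, so an $H_1$-insertion may be slid freely past a $1$- or $3$-handle stage, and (using the untwisted composition law and the fact that each $A_\gamma$ is a chain map) past the $2$-handle stage of the other factor; and (ii) a composition law for the triangle map with insertion---when two triangle cobordisms are glued, an insertion of $\wt{\xi}_1$ into the first together with an insertion of $\wt{\xi}_2$ into the second is chain homotopic to a single insertion, into the glued triangle, of a class lifting the image of $\xi_1\otimes\xi_2$ under the Mayer--Vietoris map $H_1(X_1)\oplus H_1(X_2)\to H_1(X_1\cup X_2)$, extended to exterior powers. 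Granting (i) and (ii), I would slide the $\xi_1$- and $\xi_2$-insertions to the common middle stage, merge them into a single insertion of a lift of $\xi_3$---this is exactly the assertion that $\xi_3$ is the image of $\xi_1\otimes\xi_2$ under $H_1(W_1)\oplus H_1(W_2)\to H_1(W)$---and read off the extended cobordism map for $W$ with $\xi_3$ inserted. The $\SpinC$-bookkeeping is then inherited from the untwisted composition law $F_{W_2,\frs_2}\circ F_{W_1,\frs_1}\simeq\sum_{\frs|_{W_i}=\frs_i}F_{W,\frs}$: the insertions are carried through the decomposition $\SpinC$-structure by $\SpinC$-structure, so summing over the $\SpinC$-structures on $W$ restricting to $\frs_1,\frs_2$ yields precisely $\sum_{\frs|_{W_i}=\frs_i}F_{W,\frs}(\xi_3\otimes\cdot)$.

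Throughout, one uses that the $A_\gamma$ are chain maps, that $A_\gamma A_{\gamma'}\simeq A_{\gamma'}A_\gamma$ and $A_\gamma^2\simeq 0$ so that $\Lambda^*H_1/\tors$ genuinely acts up to homotopy and insertion of a decomposable class $\gamma_1\wedge\cdots\wedge\gamma_k$ is well-defined, and that the sign in the defining formula---the $-\,h_3\cdot F_{\alpha,\beta,\gamma}(\x\otimes\y)$ term---is chosen so that the insertion depends only on the image $h\in H_1(X_{\alpha,\beta,\gamma})$ of the lift $(h_1,h_2,h_3)$.

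The step I expect to be the main obstacle is (ii) together with independence of the lift choices: one must verify that lifts of $\xi_1$, $\xi_2$, and $\xi_3$ to the relevant groups $H_1(\partial X)$ can be chosen compatibly across the whole decomposition and that the triangle maps with insertion do not depend on these lifts. This rests on the Ozsv\'ath--Szab\'o vanishing relation $F_{\alpha,\beta,\gamma}((h_1\cdot\x)\otimes\y)+F_{\alpha,\beta,\gamma}(\x\otimes(h_2\cdot\y))-h_3\cdot F_{\alpha,\beta,\gamma}(\x\otimes\y)\simeq 0$ whenever $(h_1,h_2,h_3)\in H_1(\partial X_{\alpha,\beta,\gamma})$ maps to $0$ in $H_1(X_{\alpha,\beta,\gamma})$; propagating this vanishing through the nested applications of the composition law while tracking exterior-algebra degrees is the delicate bookkeeping. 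Alternatively, one could realize the $H_1/\tors$-action via Zemke's ribbon graph cobordism maps and invoke functoriality of the graph TQFT, for which the composition law is essentially built in; but the triangle-map argument above is the one native to the framework recalled here.
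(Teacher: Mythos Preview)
The paper does not prove this proposition: it is quoted verbatim as \cite[Proposition 4.20]{os_holotri} and used as a black box, so there is no ``paper's own proof'' to compare against. Your outline is essentially the argument Ozsv\'ath and Szab\'o give in the cited reference: decompose each $W_i$ into $1$-, $2$-, and $3$-handle cobordisms, use that the $A_\gamma$-action commutes up to homotopy with the $1$- and $3$-handle maps, and verify an associativity/composition law for the triangle maps with $H_1$-insertion at the $2$-handle stage. Your identification of step (ii) and the lift-independence as the crux is accurate; in the original source this is handled via a degenerating one-parameter family of holomorphic squares, which yields the chain homotopy relating the two ways of distributing the $H_1$-insertions across a pair of glued triangles. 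The alternative you mention at the end---realizing the action via graph cobordisms and invoking functoriality of Zemke's graph TQFT---would also work and is arguably more in the spirit of the present paper, which leans on that framework elsewhere.
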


There is an $H_1(Y; \Z)/\tors$-action on multi-pointed Heegaard diagrams as well \cite[Equation 5.2]{zemke_graphcob}
\[
A_\gamma: \cCFL^-(Y, \L, \frs) \ra \cCFL^-(Y, \L, \frs),
\]
defined 
\[
A_\gamma(\x) = \sum_{\y\in \T_\al \cap \T_\be} \sum_{\substack{\phi \in \pi_2(\x, \y),\\ \mu(\phi) = 1}} a(\gamma, \phi) \# \widehat{\mathcal{M}}(\phi) U^{n_\w(\phi)}V^{n_\z(\phi)}\y.
\]
Using this action, we can define extended link cobordism maps 
\[
F_{W, \cF, \frs}: \left(\Lambda^* H_1(W)/\tors \otimes \F_2 \right) \otimes \cCFL^-(Y_0, \L_0, \frs|_{Y_0}) \ra \cCFL^-(Y_1, \L_1, \frs|_{Y_1}).
\]
in exactly the same way. Note, that since the link Floer TQFT is defined with coefficients in $\F_2$, we need to tensor the exterior algebra generated by  $H_1(Y)/\tors$ with $\F_2$.\\

\section{First Homology Action as Link Cobordism Maps}\label{Section: Homology Action}

It is helpful in geometric arguments that the $H_1/\tors$-action can be realized as a graph cobordism map on $\CF^-$. In the same way, it is beneficial to realize the $H_1(Y)/\tors$-action on $\cCFL^-$ as a link cobordism map. By Theorem \ref{Thm: Reduction to Graph Cob Maps}, $\Sigma_\w$ and $\Sigma_\z$ should be ribbon 1-skeleta of the graph in Figure \ref{Fig: Graph Action}. Our strategy will be to try to compute the decorated link cobordism map obtained by tubing on a torus whose longitude is a curve which represents the class in $H_1(Y)/\tors$. 

\begin{figure}
\centering
     \includegraphics[scale = .15]{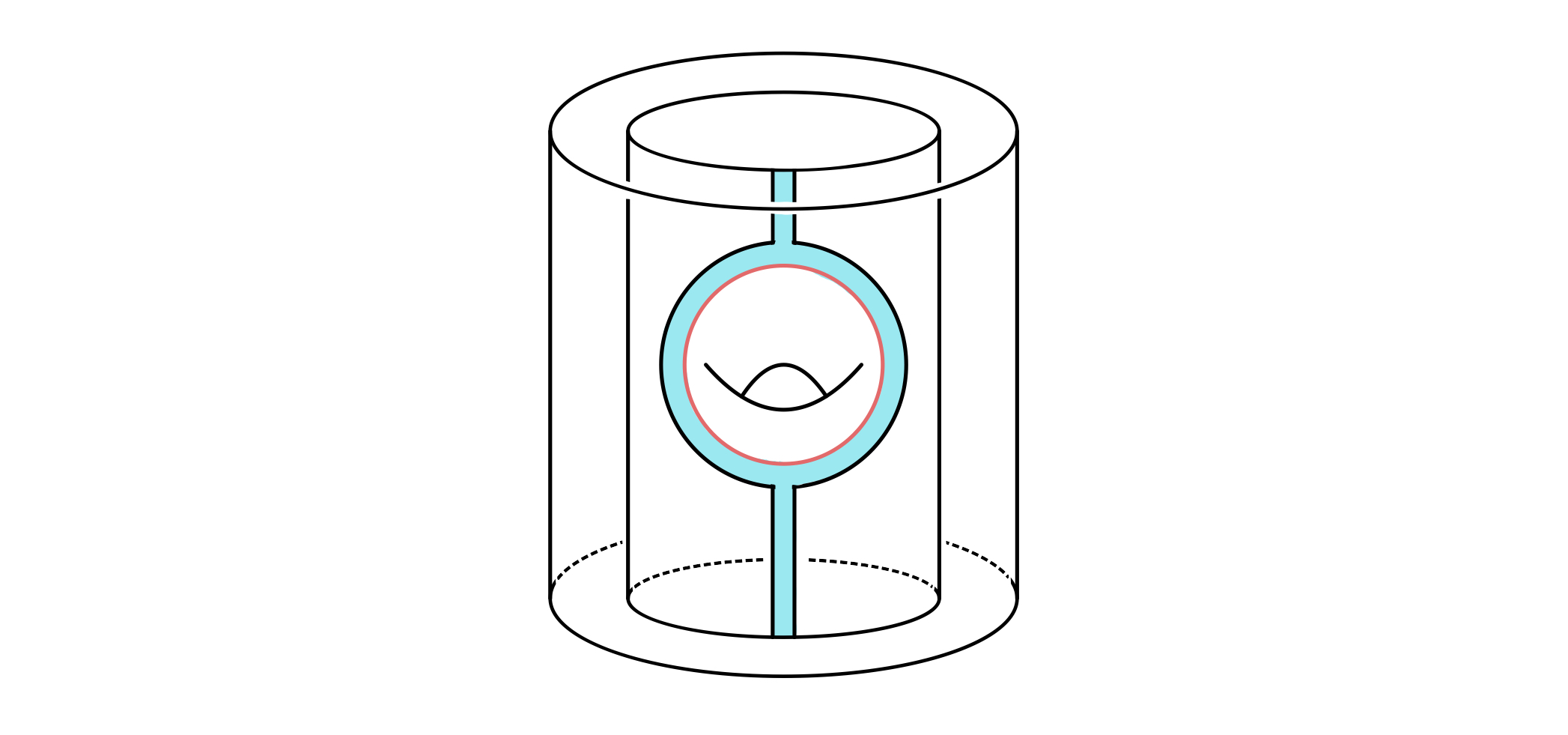}
          \caption{A schematic of the decorated surface in $S^2\times S^1\times \{0\}$ described in the text preceding Lemma \ref{Lemma: H1 Action on S1x S2}.}
               \label{Fig: Decorated surface in S1xS2}
\end{figure}

Construct a closed decorated link cobordism $\cF$ inside of $S^2\times S^1\times [-1,1]$ as follows. Let $\Sigma$ be the boundary of a tubular neighborhood of $\pt \times S^1\times \{0\}\sub S^2\times S^1\times \{0\}$. Let the dividing circles $\cA$ be two parallel closed curves on the boundary of $\Sigma$ obtained by isotoping $\pt \times S^1 \times \{0\}$ radially, so that $\cA$ divides $\Sigma$ into $\Sigma_\w$ and $\Sigma_\z$ which are both annuli. As in the text preceding Proposition \ref{Prop: Compute closed link cob in terms of graph cob}, choose disks $D_1$ and $D_2$ which intersect the same dividing arc $\pt \times S^1$, and take $(W, \cF_0)$ to be the link cobordism obtained by isotoping $\cF$ and removing the disks $D_1$ and $D_2$. See Figure \ref{Fig: Decorated surface in S1xS2}. Note that the graph shown in Figure \ref{Fig: Graph Action} is a ribbon 1-skeleton for both $\Sigma_\w$ and $\Sigma_\z$.

\begin{lemma}
\label{Lemma: H1 Action on S1x S2}
For $(S^2\times S^1\times [-1,1], \cF_0)$ the link cobordism described above 
\[F_{S^2\times S^1 \times [-1,1], \cF_0, \frs_0}(\theta^+) = \theta^- \text{      and     } F_{S^2\times S^1 \times [-1,1], \cF_0, \frs_0}(\theta^-) = 0,\]
where $\theta^+$ ($\theta^-$) is the generator of $\cCFL^-(S^2 \times S^1, \U, \frt_0)$ of higher (lower) grading, $\frs_0$ is the torsion $\SpinC$-structure on $S^2 \times S^1 \times [-1,1]$ and $\frt_0 = \frs_0|_{S^2 \times S^1}$.
\end{lemma}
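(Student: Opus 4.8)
The plan is to run the computation through Proposition~\ref{Prop: Compute closed link cob in terms of graph cob}, since $\cF_0$ is by construction obtained from the closed decorated cobordism $\cF\subset S^2\times S^1\times[-1,1]$ by the disk-removal procedure described before that proposition. So the first task is to evaluate $\Delta A$ for $\cF$. Here $\Sigma=\partial N(\pt\times S^1\times\{0\})$ is a torus and $\Sigma_\w,\Sigma_\z$ are annuli, so $\chi(\Sigma_\w)=\chi(\Sigma_\z)=0$; the $\SpinC$-structure $\frs_0$ is torsion, so $\langle c_1(\frs_0),\Sigma\rangle=0$; and $\Sigma$ lies inside the slice $S^2\times S^1\times\{0\}$, hence has a pushoff into a nearby slice disjoint from itself, giving $[\Sigma]\cdot[\Sigma]=0$. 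Therefore $\Delta A=0$.

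With $\Delta A=0$, Proposition~\ref{Prop: Compute closed link cob in terms of graph cob} gives $F_{S^2\times S^1\times[-1,1],\,\cF_0,\,\frs_0}\simeq F^{B}_{S^2\times S^1\times[-1,1],\,\Gamma_\w,\,\frs_0}\big|^{\F[U,V]}$, where $\Gamma_\w$ is a ribbon $1$-skeleton of $\Sigma_\w$. By the way $\cF$ was built, I can take $\Gamma_\w$ to be exactly the graph of Figure~\ref{Fig: Graph Action} with $\gamma$ the core curve $\pt\times S^1$, which generates $H_1(S^2\times S^1;\Z)$. Thus, under the identifications of Section~\ref{Subsection: Link Floer TQFT}, the cobordism map in question is the link-Floer $H_1$-action $A_\gamma$ on $\cCFL^-(S^2\times S^1,\U,\frt_0)$.

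It then remains to compute $A_\gamma$ on homology. Since $\U$ is a local unknot, $\cCFL^-(S^2\times S^1,\U,\frt_0)$ is obtained from $\CF^-(S^2\times S^1,\frt_0)$ by extension of scalars from $\F[U]$ to $\F[U,V]$ (connect-summing with the doubly-pointed unknot in $S^3$), and $A_\gamma$ is the corresponding extension of the ordinary $H_1$-action. Now $\HF^-(S^2\times S^1,\frt_0)\cong \F[U]\langle\theta^+\rangle\oplus\F[U]\langle\theta^-\rangle$ with $\mathrm{gr}(\theta^+)=\mathrm{gr}(\theta^-)+1$, and the action of the generator of $H_1(S^2\times S^1;\Z)$ is the nontrivial map sending $\theta^+\mapsto\theta^-$, $\theta^-\mapsto 0$; this is the classical computation of Ozsv\'ath--Szab\'o (it can also be read directly off a winded genus-$1$ Heegaard diagram). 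Transporting this back up to $\cHFL^-$: the $V=1$ reduction of $A_\gamma(\theta^+)$ equals $\theta^-$, while $A_\gamma(\theta^+)$ lies in the one-dimensional grading-$\mathrm{gr}(\theta^-)$ piece of $\cHFL^-(S^2\times S^1,\U,\frt_0)$, so $A_\gamma(\theta^+)=\theta^-$ exactly. Finally, $A_\gamma(\theta^-)=A_\gamma^2(\theta^+)=0$, using $A_\gamma^2\simeq 0$ together with freeness of $\cHFL^-(S^2\times S^1,\U,\frt_0)$ over $\F[U,V]$. This is precisely the assertion of Lemma~\ref{Lemma: H1 Action on S1x S2}.

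The step I expect to be the main obstacle is the middle one: one must check that matching $\Gamma_\w$ with the graph of Figure~\ref{Fig: Graph Action}, with the cyclic orderings at its vertices induced by the orientation of $\Sigma_\w$, genuinely yields the action of $\gamma$ rather than the trivial map or the action of the orientation-reversed graph $\overline{\Gamma}$, and — relatedly — one must invoke the nonvanishing of this $H_1$-action, since the grading constraints by themselves only force $A_\gamma(\theta^+)\in\{0,\theta^-\}$ and $A_\gamma(\theta^-)\in \F\langle U\theta^+,V\theta^+\rangle$.
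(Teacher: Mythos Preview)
Your proof is correct and follows essentially the same route as the paper: reduce via Proposition~\ref{Prop: Compute closed link cob in terms of graph cob}, check $\Delta A=0$, identify $\Gamma_\w$ with the graph of Figure~\ref{Fig: Graph Action}, and then compute the $H_1$-action on the standard Heegaard diagram for $S^2\times S^1$. You supply more detail than the paper does (on the self-intersection, and on lifting the $\CF^-$ computation to $\cCFL^-$), but the argument is the same.
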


\begin{proof}
By Proposition \ref{Prop: Compute closed link cob in terms of graph cob}, $F_{S^2\times S^1 \times [-1,1], \cF_0, \frs_0}$ is determined by its reduction to a graph cobordism map: concretely, if $F_{S^2\times S^1 \times [-1,1],\Gamma_w, \frs}^B$ is the corresponding graph map, then
\[F_{S^2\times S^1 \times [-1,1], \cF_0, \frs_0} \simeq 
V^{\Delta A} \cdot F^B_{S^2\times S^1 \times [-1,1], \Gamma_w, \frs_0}|^{\F_2[U, V]} {}
\]
under the identification of $\cCFL^-(Y, \mathbb{U}, \frs_0)$ with $\CF^-(Y, w, \frs_0) \otimes_{\F_2[W]} \F_2[U, V]$  as before. The quantity $\Delta A$, given by 
\[
\dfrac{\langle c_1(\frs_0), [\Sigma] \rangle - [\Sigma] \cdot [\Sigma]}{2} + \dfrac{\chi(\Sigma_w) - \chi(\Sigma_z)}{2},
\]
vanishes, since $[\Sigma]$ is nullhomologous in $S^1 \times S^2$, and $\Sigma_w$ and $\Sigma_z$ are both cylinders. By construction, $F^B_{S^2\times S^1 \times [-1,1], \Gamma_w, \frs_0}$ is the graph cobordism shown in Figure \ref{Fig: Graph Action}. Hence, $F^B_{S^2\times S^1 \times [-1,1], \Gamma_w, \frs_0}$ is just the map $A_\gamma$. It is straightforward to verify that the action of $[\pt \times S^1] \in H_1(S^2\times S^1)$ takes $\theta^+$ to $\theta^-$ and $\theta^-$ to zero.
\end{proof}

We now turn to the case of a nullhomologous knot $K$ embedded in an arbitrary 3-manifold $Y.$ Let $(Y \times [-1, 1], \cF_Y)$ be Morse-trivial, in the sense defined in Section \ref{Section: Intro}. The idea is to modify $\cF_Y$ by by tubing on a torus with a dividing arc which represents a class in $H_1(Y;\Z)/\tors$. Let $\gamma \sub Y\times \{0\} - (K\times \{0\})$ be such a curve. Choose a path $\lambda$ in $Y \times \{0\}$ connecting $\gamma$ to a point $p$ in $K \times \{0\}$ which lies on one of the dividing arcs. 

Let $T$ be the boundary of a tubular neighborhood of $\gamma$ in $Y \times \{0\}$. Decorate $T$ with two parallel circles isotopic to $\gamma$. Tube $T$ and $\Sigma$ together along $\lambda$. Denote the resulting surface $\Sigma_\gamma$. Decorate the tube with two parallel arcs, connected to one of the circles parallel to $\gamma$ on one end, and to one of the dividing arcs on $\cF_Y$ on the other end. A schematic of this decoration, which is denoted $\cA_\gamma$, is shown in Figure \ref{Fig: Schematic of F_Gamma}.

\begin{figure}
	\centering
		\includegraphics[scale=.13]{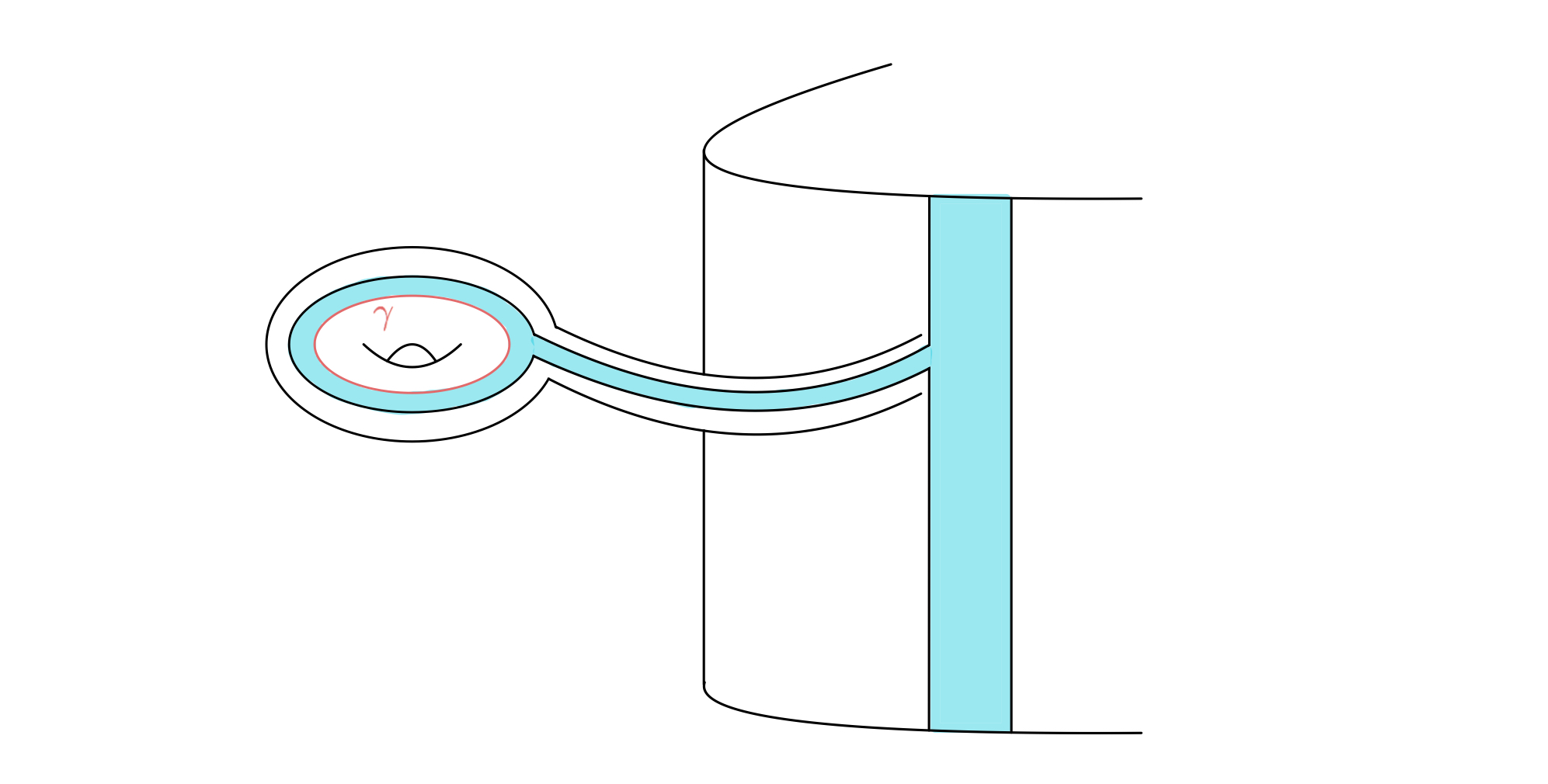}
			\caption{A schematic of the decorated surface $\cF_\gamma$. In general, $K\times \{0\}$ and $\gamma$ might be linked.}
				\label{Fig: Schematic of F_Gamma}
\end{figure}

\begin{lemma}
\label{lemma: product action}
For the decorated surfaces $(Y\times [-1,1], \cF_Y)$ and $(Y \times [-1,1], \cF_\gamma)$ described above, we have that
\[
F_{Y\times [-1,1], \cF_\gamma, \frs}(\cdot) \simeq F_{Y\times [-1,1], \cF_Y, \frs}([\gamma]\otimes \cdot).\]
\end{lemma}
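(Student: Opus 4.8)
The plan is to reduce the identity to the $S^2 \times S^1$ computation of Lemma~\ref{Lemma: H1 Action on S1x S2} together with the composition law and the surgery-invariance of Proposition~\ref{prop: surgery preserves link cob map}. The conceptual picture is that the tubed torus $(T, \text{two circles parallel to }\gamma)$ sitting inside $Y \times \{0\}$ is, after surgering out $\gamma$, exactly the decorated surface in $S^2 \times S^1$ from the preceding discussion, and the local surgery there realizes the homology action $A_\gamma$. So the first step is to set things up carefully: isotope $\cF_\gamma$ so that the tube connecting $T$ to $\Sigma$ is a thin product tube along $\lambda$, and split $(Y \times [-1,1], \cF_\gamma)$ as a composition of link cobordisms, peeling off a collar in which all the interesting surgery/tubing happens. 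More precisely, I would write $Y \times [-1,1] = (Y \times [-1, -1/2]) \cup (Y \times [-1/2, 1/2]) \cup (Y \times [1/2, 1])$, with the trivial cobordism on the outer pieces and the tubed torus contained in the middle slab.

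Next I would invoke Proposition~\ref{prop: sphere tubing} in reverse, or rather set up the comparison directly. Pick the standard $2$-sphere $S = \pt \times S^2$ inside the $S^2 \times S^1$ neighborhood; more to the point, let $N(\gamma) \cong S^1 \times D^3$ be a tubular neighborhood of $\gamma$ in the middle slab, chosen disjoint from $K \times [-1,1]$. Since $\gamma$ represents a class in $H_1(Y)/\tors$ which I may assume is non-divisible (a general class is a $\Z$-linear combination of such, and both sides of the claimed identity are additive in $[\gamma]$ via the exterior-algebra structure on $\Lambda^* H_1(W)/\tors \otimes \F_2$, so it suffices to treat non-divisible classes — this reduction should be spelled out), Proposition~\ref{prop: surgery preserves link cob map} applies: doing simultaneous surgery on $\gamma \subset (W, \Sigma_\gamma)$ along the decorated dividing-arc circle produces a link cobordism $(W(\gamma), \cF_\gamma(\gamma))$ inducing the same map, and the $\SpinC$-structure extends uniquely because $\gamma$ is non-divisible. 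After this surgery the piece of $\cF_\gamma$ living in $N(\gamma)$, namely the torus $T$ with its two parallel dividing circles, becomes precisely the decorated pair $(D^2 \times S^2, D^2 \times S^0)$ of Figure~\ref{Fig: Surgery Decorations}; what remains is a copy of the Morse-trivial $\cF_Y$ with a local modification in a ball around $p \in K \times \{0\}$ that is exactly the decorated cobordism $\cF_0$ in $S^2 \times S^1 \times [-1,1]$ of Lemma~\ref{Lemma: H1 Action on S1x S2}, connect-summed (via the tube along $\lambda$) to $\cF_Y$ at the basepoint region.

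Now the composition law for link cobordism maps (functoriality, \cite[Theorem B]{zemke_linkcob}) finishes the argument: the map induced by $\cF_\gamma$ factors as $F_{\cF_Y}$ composed with (a connected-sum insertion of) $F_{S^2 \times S^1 \times [-1,1], \cF_0, \frs_0}$, and by Lemma~\ref{Lemma: H1 Action on S1x S2} the latter is the operator sending $\theta^+ \mapsto \theta^-$, $\theta^- \mapsto 0$ — i.e. it is exactly the local model for the chain map $A_\gamma$ inducing the $[\gamma]$-action, by \cite[Equation 5.2]{zemke_graphcob} and Theorem~\ref{Thm: Reduction to Graph Cob Maps}. Tracking the basepoint/tube along $\lambda$ shows the insertion happens at the correct spot so that the composite is $F_{\cF_Y}([\gamma] \otimes \cdot)$ rather than some other operator. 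The main obstacle I anticipate is the bookkeeping in this last step: verifying that the connected-sum/tube along $\lambda$ correctly conjugates the $S^2 \times S^1$ model into the honest $A_\gamma$ action on $\cCFL^-(Y, K, \frs)$ — in particular that the decoration $\cA_\gamma$ (two parallel arcs running from a $\gamma$-parallel circle to a dividing arc of $\cF_Y$) matches the ribbon $1$-skeleton picture of Figure~\ref{Fig: Graph Action} so that Theorem~\ref{Thm: Reduction to Graph Cob Maps} identifies the reduction with the graph-action map $A_\gamma$ on the nose, and that no stray powers of $U$ or $V$ are introduced (the relevant $\Delta A$-type quantity vanishes here as in Lemma~\ref{Lemma: H1 Action on S1x S2} since the tube and torus contribute zero to homology and Euler characteristic balances between $\Sigma_\w$ and $\Sigma_\z$). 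Everything else is a routine application of the composition law and surgery invariance already in hand.
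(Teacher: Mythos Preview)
Your use of Proposition~\ref{prop: surgery preserves link cob map} is the wrong move here, and it creates a genuine gap. Surgering along a dividing circle parallel to $\gamma$ replaces the ambient $4$--manifold $Y\times[-1,1]$ by $W(\gamma)$, and when $[\gamma]\ne 0$ in $H_1(Y)/\tors$ this $W(\gamma)$ is \emph{not} a product and not even a homology cobordism (the core disk of the new $D^2\times S^2$ kills $[\gamma]$ in $H_1$). So after that step you are computing $F_{W(\gamma),\,\cF_\gamma(\gamma)}$, and you give no argument identifying this with $A_\gamma$ on $\cCFL^-(Y,K,\frs)$. Your claim that ``what remains is \dots\ exactly the decorated cobordism $\cF_0$ in $S^2\times S^1\times[-1,1]$'' is not correct: the surgery replaces $S^1\times D^3$ by $D^2\times S^2$, which is not an $S^2\times S^1\times I$ collar, and only an annular neighborhood of the dividing circle on $T$ lives in $N(\gamma)$, not the whole torus. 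You also cannot appeal to Proposition~\ref{prop: sphere tubing} in $W(\gamma)$, since that proposition needs the ambient manifold to be a homology cobordism. (Separately, your reduction to non-divisible $[\gamma]$ via ``additivity'' of $F_{\cF_\gamma}$ in $[\gamma]$ is circular: that additivity is exactly what the lemma would give you, not something available in advance.)

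The paper's proof avoids surgery entirely. It decomposes $(Y\times[-1,1],\cF_\gamma)$ as $(X,\cF_X)\circ(N_\lambda(\gamma),\cF)$ by excising a $4$--dimensional regular neighborhood of $\gamma\cup\lambda$; the piece $(N_\lambda(\gamma),\cF)$ is then a cobordism $(Y,K)\to (Y\amalg S^2\times S^1,\,K\amalg U)$ that factors as a $0$--handle/birth, then a $1$--handle (both feet on the new $0$--handle), then the $S^2\times S^1\times[-1,1]$ model of Lemma~\ref{Lemma: H1 Action on S1x S2}. This composite visibly sends $\x\mapsto \x\otimes\theta^-$. One then observes that the same $0$--handle/$1$--handle cobordism carrying instead a trivially decorated disk $\cD$, followed by the $H_1$--action of $[\gamma]$, gives the same answer $\x\mapsto\x\otimes\theta^-$; by the definition of the extended cobordism maps this equals $F_{N_\lambda(\gamma),\cD}([\gamma]\otimes\x)$. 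Composing both with $F_{X,\cF_X}$ and applying the composition law for the extended maps yields the lemma. The point is that Lemma~\ref{Lemma: H1 Action on S1x S2} is used \emph{after} a genuine $1$--handle attachment has produced an honest $S^2\times S^1$ boundary component, not after a surgery that alters the ambient cobordism.
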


\begin{proof}
Decompose $(Y\times [-1,1], \cF_\gamma)$ as $(X, \cF_X) \circ (N_\lambda(\gamma), \cF)$ where $X= ((Y \times [-1,1]) - N_\lambda(\gamma))$, $\cF = \cF_\gamma \cap N_\lambda(\gamma)$, and $\cF_X = \cF_\gamma \cap X$. $\cF$ is a punctured torus whose decoration is shown in Figure \ref{Fig: Comparison of Tubed Link Cob with Graph Action}.  Given a $\SpinC$-structure $\frs$ on $Y \times [-1,1]$, its restriction to $X$ is torsion on $\partial N_\lambda(\gamma)$, which can be extended by $\frs_0$, the unique $\SpinC$-structure on $N_\lambda(\gamma)$. By considering another Mayer-Vietoris sequence, it is not hard to see that this extension is unique. The composition law then implies that \[F_{Y\times [-1,1], \cF_\gamma, \frs} \simeq F_{X, \cF_X, \frs|_X} \circ F_{N_\lambda(\gamma), \cF, \frs_0}.\]

The map $F_{N_\lambda(\gamma), \cF, \frs_0}$ associated to the cobordism $(Y, K) \ra (Y \amalg S^2 \times S^1, K \amalg U)$ can be computed as the composition of a 0-handle/birth map, followed by a 1-handle map, followed by the map $F_{S^2\times S^1 \times [-1,1], \cF_0, \frs_0}$ computed above: given an element $\x \in \cCFL^-(Y, \L, \frs)$, the 0-handle/birth map simply introduces a pair of intersection points $c^+, c^-$ on a genus 0 Heegaard diagram for $S^3$, and takes $\x \mapsto \x \otimes c^+$. Attaching a 1-handle, with both feet attached to the new 0-handle corresponds to the map $\x\otimes c^+ \mapsto \x\otimes \theta^+$ \cite[Section 5]{zemke_linkcob}. By Lemma \ref{Lemma: H1 Action on S1x S2}, $F_{S^2\times S^1 \times [-1,1], \cF_0, \frs_0}(\x \otimes \theta^+) = \x \otimes \theta^-$. All together then, \[F_{N_\lambda(\gamma), \cF, \frs_0}(\x) = \x \otimes \theta^-.\] 
On the other hand, consider the cobordism $F_{N_\lambda(\gamma), \cD, \frs_0}$ where $\cD$ is a disk decorated with a single arc, followed by the action of $[\gamma]$: this can be computed as the composition of the 0-handle/birth and 1-handle maps followed by the action of $[\gamma]$. Just as before, the 0-handle/birth and 1-handle maps take an intersection point $\x$ to $\x \otimes \theta^+$ and the action of $[\gamma]$ takes this element to $\x \times \theta^-$. Hence, 
\[
F_{N_\lambda(\gamma), \cF, \frs_0}(\x) = [\gamma] \cdot F_{N_\lambda(\gamma), \cD, \frs_0}(\x).
\]
See Figure \ref{Fig: Comparison of Tubed Link Cob with Graph Action} for a comparison of these two cobordism maps. 

By definition of the extended link cobordism maps, $[\gamma]\cdot F_{N_\lambda(\gamma), \cD, \frs_0}(\x) = F_{N_\lambda(\gamma), \cD, \frs_0}( [\gamma] \otimes \x)$. Combining these observations with the composition law for the extended cobordism maps shows
\begin{align*}
F_{Y\times [-1,1], \cF_\gamma, \frs}(\x) &\simeq F_{X, \cF_X, \frs|_X} \circ F_{N_\lambda(\gamma), \cF, \frs_0}(\x) \\
&\simeq F_{X, \cF_X, \frs|_X} \circ F_{N_\lambda(\gamma), \cD, \frs_0}([\gamma] \otimes \x) \\
&\simeq F_{X, \cF_X, \frs_X}(1\otimes F_{N_\lambda(\gamma), \cD, \frs_0}([\gamma] \otimes \x))\\
& \simeq F_{Y\times [-1,1], \cF_Y, \frs}([\gamma] \otimes \x)
\end{align*} 
as desired.
\end{proof}

\begin{figure}
     \centering
          \includegraphics[scale=.18]{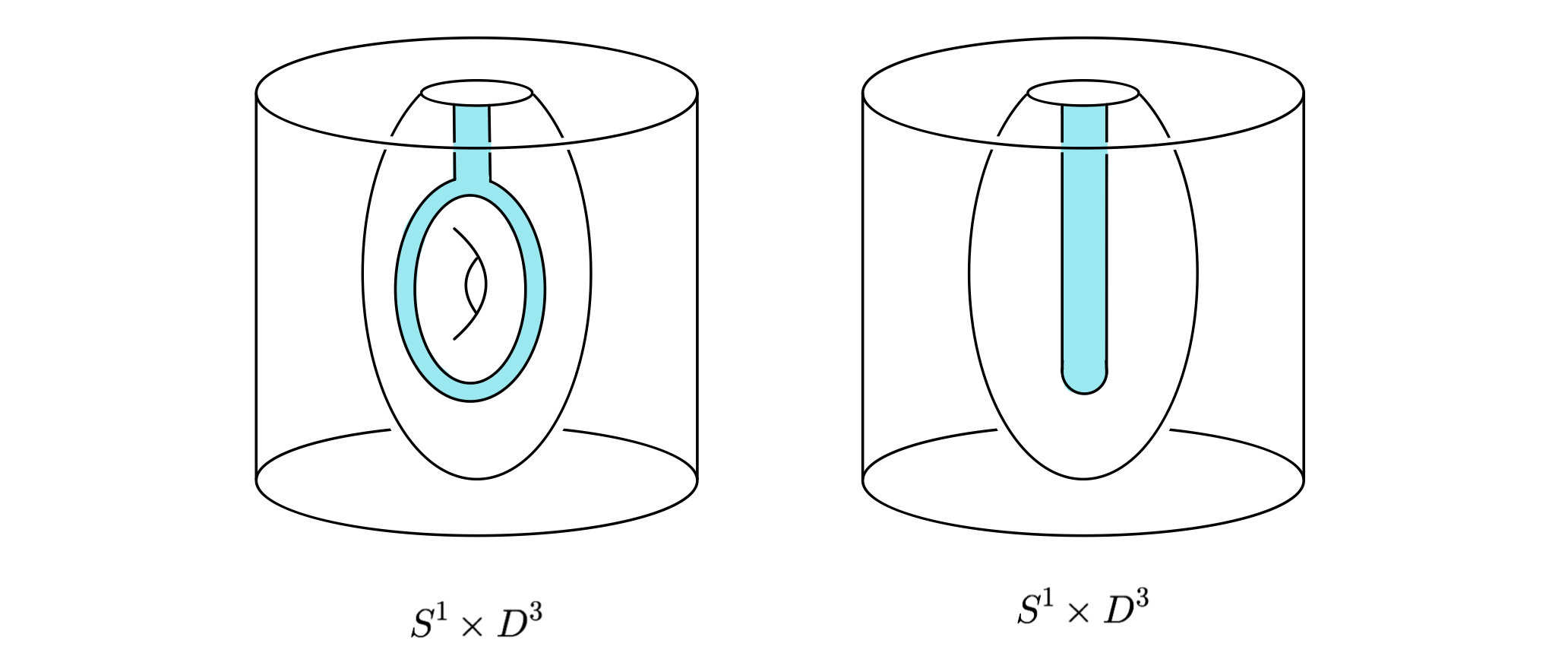}
               \caption{When the link cobordism map on the right is followed by the action of $\gamma$, it becomes equivalent to the map on the left.}
                    \label{Fig: Comparison of Tubed Link Cob with Graph Action}
\end{figure}

Note that the choice of path $\lambda$ did not matter, since the diffeomorphism type of the neighborhood $N_\lambda(\gamma)$ did not depend on $\lambda$. 

Let $(W, \cF): (Y_0, K_0) \ra (Y_1, K_1)$ be a link cobordism which is concordance Morse-trivial. Decompose $W$ as a composition of handle attachments $W_3 \circ W_2 \circ W_1$.  Let $\gamma$ be a curve in $W$ which represents an element of $H_1(W)/\tors$. Homotope the curve $\gamma$ so it is contained in the boundary of $W_1$, which is denoted $\widetilde{Y} = \partial_+W_1$. In this dimension and codimension, such a homotopy can be taken to be an isotopy. Let $\widetilde{\cF}_\gamma$ be the decorated surface in $\widetilde{Y}\times [-1,1]$ described above which realizes the action of $[\gamma]$. Let $(W, \cF_\gamma)$ be the link cobordism $(W_3, \cF_3)  \circ (W_2, \cF_2)  \circ (\widetilde{Y}\times [-1,1], \widetilde{\cF}_\gamma)  \circ (W_1, \cF_1)$, where $\cF_i$ is $\cF\cap W_i.$

\begin{lemma}
\label{lemma: homology action}
Let $(W, \cF_\gamma)$ be the decorated surface described above. Then, \[F_{W, \cF, \frs}([\gamma] \otimes \cdot) \simeq F_{W, \cF_\gamma, \frs}(\cdot).\]
\end{lemma}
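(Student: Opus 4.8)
The plan is to mimic the proof of Lemma \ref{lemma: product action}, now working with the handle decomposition $W = W_3 \circ W_2 \circ W_1$ used to build $(W, \cF_\gamma)$ and pushing the computation through the composition laws for the ordinary and extended link cobordism maps. First I would note that inserting the trivial product collar $(\widetilde{Y} \times [-1,1], \widetilde{\cF}_Y)$ between $(W_1, \cF_1)$ and $(W_2, \cF_2)$ changes neither the diffeomorphism type of $W$ nor the isotopy class of $(W, \cF)$ --- here one uses that $(W, \cF)$ is concordance Morse-trivial, so $\Sigma$ passes through the collar as a product annulus --- and that the trivial decorated product cobordism induces the identity on $\cCFL^-$. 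Writing $\frs_i = \frs|_{W_i}$ and $\frs_0 = \frs|_{\widetilde{Y} \times [-1,1]}$ (a torsion $\SpinC$-structure), the composition law for link cobordism maps then gives
\[
F_{W, \cF, \frs} \simeq F_{W_3, \cF_3, \frs_3} \circ F_{W_2, \cF_2, \frs_2} \circ F_{W_1, \cF_1, \frs_1},
\]
while the defining decomposition of $(W, \cF_\gamma)$ yields
\[
F_{W, \cF_\gamma, \frs} \simeq F_{W_3, \cF_3, \frs_3} \circ F_{W_2, \cF_2, \frs_2} \circ F_{\widetilde{Y} \times [-1,1], \widetilde{\cF}_\gamma, \frs_0} \circ F_{W_1, \cF_1, \frs_1},
\]
with the bookkeeping of $\SpinC$-structures and their extensions handled exactly as in the proof of Lemma \ref{lemma: product action}.

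Next I would apply Lemma \ref{lemma: product action} to the middle factor, replacing $F_{\widetilde{Y} \times [-1,1], \widetilde{\cF}_\gamma, \frs_0}(\cdot)$ by $F_{\widetilde{Y} \times [-1,1], \widetilde{\cF}_Y, \frs_0}([\gamma] \otimes \cdot)$, and then reassemble using the composition law for the \emph{extended} link cobordism maps (the link-cobordism analogue of \cite[Proposition 4.20]{os_holotri}, already invoked in the proof of Lemma \ref{lemma: product action}). Set $W' = W_3 \circ W_2 \circ (\widetilde{Y} \times [-1,1])$ and $\cF' = \widetilde{\cF}_Y \cup \cF_2 \cup \cF_3$, so that $W = W' \circ W_1$ and $\cF = \cF' \cup \cF_1$ up to the harmless collar; since $\gamma \subset \widetilde{Y} \subset W'$, the class $[\gamma] \in H_1(\widetilde{Y})/\tors$ pushes forward to a class in $H_1(W')/\tors$ whose image in $H_1(W)/\tors$ is the one appearing in the statement. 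Applying the extended composition law first inside $W'$ to absorb $W_2$ and $W_3$ (each carrying the trivial class $1 \in \Lambda^* H_1/\tors$), and then across $W = W' \circ W_1$ (with $W_1$ again carrying the trivial class), I obtain
\[
F_{W, \cF_\gamma, \frs} \simeq F_{W', \cF', \frs'}\big([\gamma] \otimes F_{W_1, \cF_1, \frs_1}(\cdot)\big) \simeq F_{W, \cF, \frs}\big([\gamma] \otimes \cdot\big),
\]
which is the assertion. Equivalently, at the chain level one can note that $F_{\widetilde{Y} \times [-1,1], \widetilde{\cF}_Y, \frs_0}([\gamma] \otimes \cdot)$ is by construction the $H_1/\tors$-action $A_\gamma$ on $\cCFL^-(\widetilde{Y}, \widetilde{\L}, \frs_0|_{\widetilde{Y}})$, and that inserting $A_\gamma$ at the level $\widetilde{Y} = \partial_- W_2$ computes $F_{W, \cF, \frs}([\gamma] \otimes \cdot)$ via the triangle-map formula with the preimage $([\gamma], 0, 0)$ of $[\gamma]$ in $H_1(\partial X_{\alpha, \beta, \gamma})$.

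I expect the main obstacle to be precisely this last identification: that the extended link cobordism map $F_{W, \cF, \frs}([\gamma] \otimes \cdot)$ may be computed by isotoping $\gamma$ into a level set $\widetilde{Y}$ of a self-indexing Morse function and inserting the action $A_\gamma$ there. This is the link-cobordism counterpart of the analogous property of the Ozsv\'ath--Szab\'o extended cobordism map, and confirming it requires a clean statement of the composition law for extended link cobordism maps, a check that the choice of preimage of $[\gamma]$ in $H_1(\partial X_{\alpha, \beta, \gamma})$ does not affect the resulting map (using $A_\gamma^2 \simeq 0$ and the compatibility of the three three-manifold actions in the triangle-map formula), and the observation that the product annulus $\Sigma$ contributes nothing to the handle maps of $W_1$, $W_2$, and $W_3$. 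The remaining work --- tracking $\SpinC$-structures and their extensions across the collar and across the $2$-handle cobordism $W_2$ --- is routine and parallels the $\SpinC$ discussion in the proof of Lemma \ref{lemma: product action}.
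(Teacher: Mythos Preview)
Your approach is correct and essentially identical to the paper's: decompose $(W,\cF_\gamma)$ as the four-fold composition, apply Lemma~\ref{lemma: product action} to the collar factor, and reassemble. The ``main obstacle'' you anticipate is not one: the extended link cobordism map $F_{W,\cF,\frs}([\gamma]\otimes\cdot)$ is \emph{defined} in the paper (following the Ozsv\'ath--Szab\'o recipe) as $F_{W_3,\cF_3,\frs_3}\circ F_{W_2,\cF_2,\frs_2}\big([\gamma]\otimes F_{W_1,\cF_1,\frs_1}(\cdot)\big)$ with $\gamma$ pushed into $\widetilde{Y}=\partial_+W_1$, and the triangle-map formula with preimage $([\gamma],0,0)$ then reduces this to $F_{W_3}\circ F_{W_2}\big([\gamma]\cdot F_{W_1}(\cdot)\big)$ --- so the final identification is literally the definition, and no separate extended composition law is needed.
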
 

\begin{proof}
We will make use of the decomposition of $(W, \cF_\gamma)$ as
\[
(W_3, \cF_3)  \circ (W_2, \cF_2)  \circ (\widetilde{Y}\times [-1,1], \widetilde{\cF}_\gamma)  \circ (W_1, \cF_1).
\]
$F_{W, \cF_\gamma}$ can be computed as the composition:
\[
F_{W, \cF_\gamma, \frs}(x) \simeq F_{W_3, \cF_3, \frs|_{W_3}}  \circ F_{W_2, \cF_2, \frs|_{W_2}}  \circ F_{\tilde{Y}\times [-1,1], \cF_\gamma, \frs|_{\tilde{Y}\times [-1,1]}}  \circ F_{W_1, \cF_1, \frs|_{W_1}}(x).
\]
By Lemma \ref{lemma: product action}, this map is chain homotopic to 
\[
F_{W_3, \cF_3, \frs|_{W_3}}  \circ F_{W_2, \cF_2, \frs|_{W_2}}  \circ F_{\tilde{Y}\times [-1,1], \cF_{\tilde{Y}}, \frs|_{\tilde{Y}\times [-1,1]}} ([\gamma] \otimes F_{W_1, \cF_1, \frs|_{W_1}}(x)).
\]
Since $(\tilde{Y} \times [-1,1], \cF_{\tilde{Y}})$ is the identity, \[F_{\tilde{Y}\times [-1,1], \cF_{\tilde{Y}}, \frs|_{\tilde{Y}\times [-1,1]}} ([\gamma] \otimes F_{W_1, \cF_1, \frs|_{W_1}}(x)) = [\gamma] \cdot F_{W_1, \cF_1, \frs|_{W_1}}(x) + 0,\] and hence this map can be rewritten as:
\[
F_{W_3, \cF_3, \frs|_{W_3}}  \circ F_{W_2, \cF_2, \frs|_{W_2}} ([\gamma] \cdot  F_{W_1, \cF_1, \frs|_{W_1}}(x)).
\]
But, since $\gamma$ was chosen as to lie in $\widetilde{Y}$, this is by definition equal to the map $F_{W, \cF, \frs}([\gamma] \otimes x)$.
\end{proof}

\section{Proof of Main Theorem}\label{section: proof of main thm}

As is typical in proving results of this kind, our strategy will be to compare the double of a ribbon $\Z$-homology concordance to a Morse-trivial link cobordism. Recall that the double of a link cobordism $(W, \Sigma): (Y_0, K_0) \ra (Y_1, K_1)$ is the link cobordism $(D(W), D(\Sigma)) = (W, \Sigma) \cup_{(Y_1, K_1)} (\overline{W}, \overline{\Sigma})$, where $(\overline{W}, \overline{\Sigma})$ is the link cobordism obtained by turning $(W, \Sigma)$ around and reversing the orientation. 

It will be helpful to have a version of the ``sphere tubing" property of the link cobordism maps \cite[Lemma 3.1]{zemke_ribbon}, \cite[Lemma 4.2]{millerzemke_stronglyhomotopy}; if $(W, \cF)$ is a link cobordism in a homology cobordism and $S$ is a null-homologous 2-sphere embedded in the complement of the link cobordism, $S$ can be tubed to the embedded surface without changing the induced map.

\begin{prop}
\label{prop: sphere tubing}
Let $\cF = (\Sigma, \cA)$ be a decorated link cobordism in a homology cobordism $W$, and let $S \sub W$ be a smoothly embedded, nullhomologous sphere disjoint from $\cF$. Let $\cF'$ be a decorated cobordism obtained by connecting $\Sigma$ and $S$ by a tube whose feet are disjoint from $\cA$. Then,  \[ F_{W, \cF, \frs} \simeq F_{W, \cF', \frs}.\]
\end{prop}

\begin{proof}
Factor $F_{W, \cF, \frs}$ and $F_{W, \cF', \frs}$ through a regular neighborhood $N(S)$ of $S$. Since $S$ is nullhomologous, $N(S)$ can be identified with $D^2\times S^2$. $\cF'$ intersects $N(S)$ in a disk $D'$ and $\partial N(S)$ in an unknot. We can perturb $\cF$ so it meets $N(S)$ in a disk $D$ and $\partial N(S)$ in an unknot as well. Let $\cD$ and $\cD'$ be the disks $D$ and $D'$ decorated with a single dividing arc. Since $S$ is nullhomologous, the restriction of a given $\frs \in \SpinC(W)$ to $N(S)$ will be torsion. By \cite[Lemma 3.1]{zemke_ribbon}, $F_{N(S), \cD, \frs|_{N(S)}}$ does not depend on the choice of embedded disk, and so $F_{N(S), \cD, \frs|_{N(S))}} \simeq F_{N(S), \cD', \frs|_{N(S)}}$.

Moreover, since $W$ is a homology cobordism, the map $H^2(W) \ra H^2(\partial_-W)$ is an isomorphism, and, in particular, an injection. It follows from the following diagram that the map $H^2(W) \ra H^2(W - N(S))$ is injective as well.

\begin{center}
\begin{tikzcd}
H^2(W) \ar[r, "\cong"] \ar[d] &
	H^2(\partial_- W) \\

H^2(W-N(S)) \ar[ur]
\end{tikzcd}
\end{center}

Therefore, a given $\SpinC$-structure on $W - N(S)$ will extend over $N(S)$, and moreover will extend uniquely. By the composition law for the link cobordism maps, 

\begin{align*}
F_{W, \cF, \frs} &\simeq F_{W-N(S), \cF-N(S), \frs|_{W-N(S)}}\circ F_{N(S), \cD, \frs|_{N(S))}}\\
& \simeq F_{W-N(S), \cF-N(S), \frs|_{W-N(S)}}\circ F_{N(S), \cD', \frs|_{N(S))}} \\
&\simeq F_{W, \cF', \frs},
\end{align*}
as desired.
\end{proof}

We will prove Theorem \ref{Main theorem} in two steps: we will prove the theorem for link cobordisms $(W, \Sigma)$ which are concordance Morse-trivial and then argue that we can always reduce to this case. 

The first step will follow from Proposition \ref{prop: surgery preserves link cob map}. Let $(W, \cF): (Y_0, K_0) \ra (Y_1, K_1)$ be a ribbon $\Z$-homology concordance which is concordance Morse-trivial and decorated by a pair of parallel arcs. Decompose $W = W_1\cup W_2$ into the 1- and 2-handle cobordisms. A key observation of \cite{DLVW_ribbon} is that the product cobordism $Y_0 \times [-1,1]$ and the double of $W$ can be obtained by two different surgeries on the same intermediate manifold $X = D(W_1)$. $X$ can be described explicitly as $(Y_0 \times [-1,1]) \#^n (S^1 \times S^3)$. It is not hard to see that surgery on $S^1 \times S^3$ along $S^1 \times \pt$ yields $S^4$, and so that $Y_0\times [-1,1]$ can be obtained from surgery on $X$ is straightforward. That $D(W)$ can also be obtained by surgery on $X$ follows from the following lemma.

\begin{lemma}\label{lemma: Surgery on doubles}\cite[Proposition 5.1]{DLVW_ribbon}
Let $W: Y_0 \ra Y_1$ be a cobordism corresponding to attaching 2-handles along curves $\gamma_1, \dots, \gamma_n \sub Y_0$. Then, the double of $W$ can be obtained from $Y_0 \times [-1,1]$ by doing surgery on $\gamma_1, \dots, \gamma_n\sub Y_0 \times \{0\}$.
\end{lemma}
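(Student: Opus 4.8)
The plan is to reduce to the case of a single $2$-handle and then compute the double $D(W)$ by hand, the crucial point being that the double of a single $2$-handle attachment is a regular neighborhood of a $2$-sphere, i.e.\ a copy of $D^2 \times S^2$. First I would reduce to $n = 1$: the curves $\gamma_i$ may be taken to have pairwise disjoint tubular neighborhoods in $Y_0$, so the $2$-handles of $W$ are attached in disjoint regions, and both the doubling operation and the surgeries on the $\gamma_i \times \{0\}$ are supported near these neighborhoods; hence it suffices to treat a single framed curve $\gamma$, running the same local argument near each $\gamma_i$ in general.

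Next I would give the handle-theoretic description of $D(W)$. Write $W = (Y_0 \times I) \cup_\phi h$ with $h = D^2 \times D^2$ a $4$-dimensional $2$-handle and $\phi \colon \partial D^2 \times D^2 \hookrightarrow Y_0 \times \{1\}$ a tubular neighborhood of $\gamma$ realizing the given framing. Turning $W$ around replaces the $2$-handle by its dual, attached along a tubular neighborhood of the belt circle $\{0\} \times \partial D^2 \subset Y_1$, so $D(W) = (Y_0 \times I) \cup h \cup \bar h$ with $\bar h = D^2 \times D^2$ glued to $h$ along the belt circle. The free part of $\partial h$ is the solid torus $D^2 \times \partial D^2$, which is itself a tubular neighborhood of the belt circle, so $\bar h$ is glued to $h$ along all of $D^2 \times \partial D^2$; identifying factors, $h \cup \bar h \cong (D^2 \times D^2) \cup_{D^2 \times S^1} (D^2 \times D^2) \cong D^2 \times (D^2 \cup_{S^1} D^2) = D^2 \times S^2$, the neighborhood of the belt $2$-sphere. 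Under this identification the attaching region $\partial D^2 \times D^2$ of $h$ becomes $\partial D^2 \times (\text{hemisphere}) \cong S^1 \times D^2 \subset \partial(D^2 \times S^2) = S^1 \times S^2$, still a tubular neighborhood of $\gamma$ in $Y_0 \times \{1\}$ carrying the handle framing. Thus $D(W) \cong (Y_0 \times I) \cup_\Phi (D^2 \times S^2)$, where $\Phi$ glues $D^2 \times S^2$ onto $\nu(\gamma) \subset Y_0 \times \{1\}$ with slope equal to the $2$-handle framing.

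Finally I would recognize this as a surgery. Attaching $D^2 \times S^2$ to a collar of a $3$-manifold slice along a tubular neighborhood $\nu(\gamma)$ — gluing along the solid torus $S^1 \times D^2 \subset S^1 \times S^2$ and leaving the complementary solid torus $S^1 \times (S^2 \setminus D^2)$ as new boundary — is precisely the local model for removing $\nu(\gamma \times \{0\}) \cong S^1 \times D^3$ from $Y_0 \times [-1,1]$ and regluing $D^2 \times S^2$ along all of $S^1 \times S^2$, i.e.\ for surgery on $\gamma \times \{0\}$ with the given framing; concretely one isotopes $\gamma \times \{0\}$ into a collar of the top boundary and checks that the two descriptions of the resulting boundary and interior agree. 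Summing over the $\gamma_i$ then gives the lemma.

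I expect the main obstacle to be the framing bookkeeping in the last two steps: verifying that the framing with which $D^2 \times S^2$ is attached — equivalently the surgery slope on $\gamma \times \{0\}$ — is exactly the framing of the $2$-handle, and that the canonical framing of the dual handle is the one producing $D^2 \times S^2$ rather than some other $D^2$-bundle over $S^2$. This is where care is needed, since the wrong framing would change the diffeomorphism type; the remaining manipulations are routine.
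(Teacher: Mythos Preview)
The paper does not give its own proof of this lemma; it is quoted directly from \cite[Proposition 5.1]{DLVW_ribbon} and stated without argument. So there is nothing in the paper to compare your proposal against.

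That said, your argument is the standard one and is correct. The reduction to a single handle is fine since the attachments are supported in disjoint regions. The key computation $h \cup \bar h \cong D^2 \times S^2$ is right: doubling a $4$-dimensional $2$-handle along its free face $D^2 \times \partial D^2$ gives $D^2 \times (D^2 \cup_{S^1} D^2) = D^2 \times S^2$, a regular neighborhood of the belt sphere. Recognizing the result as $(Y_0 \times [-1,1])$ with a tubular neighborhood $S^1 \times D^3$ of $\gamma \times \{0\}$ replaced by $D^2 \times S^2$ is then exactly the definition of surgery. Your worry about framings is legitimate but not a real obstacle: the dual handle is attached along $D^2 \times \partial D^2$ with the canonical product framing, which forces the trivial $D^2$-bundle rather than the twisted one, and the identification of the attaching torus with $\nu(\gamma)$ carries the original $2$-handle framing by construction.
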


To apply Proposition \ref{prop: surgery preserves link cob map}, there is a homological condition on the collection of surgery curves $\alpha_1, \dots, \alpha_n$, which must be satisfied, namely the restriction map $H^1(X - \amalg N(\alpha_i)) \ra H^1(\amalg \partial N(\alpha_i))$ must be surjective. 

\begin{lemma}\label{lemma: restriction maps surjective}
If $\alpha_1, \dots, \alpha_n\sub X$ are either the attaching curves of the 2-handles of $W$ or the core curves $S^1\times \pt$ of the $S^1\times S^3$ summands, which we denote $\gamma_1, ..., \gamma_n$ and $\eta_1, ..., \eta_n$ respectively, the restriction map $H^1(X - \amalg N(\alpha_i)) \ra H^1(\amalg \partial N(\alpha_i))$ is surjective.
\end{lemma}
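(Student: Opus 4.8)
The plan is to compute the relevant cohomology groups directly from the explicit description $X = (Y_0 \times [-1,1]) \mathbin{\#}^n (S^1 \times S^3)$, exploiting the fact that $Y_0 \times [-1,1]$ is a $\Z$-homology cobordism. First I would record that $H^1(X;\Z) \cong H^1(Y_0;\Z) \oplus \Z^n$, the $\Z^n$ factor being generated by the duals of the core circles $\eta_1,\dots,\eta_n$ of the $S^1\times S^3$ summands; more relevantly, removing the tubular neighborhoods $N(\alpha_i)$ only affects the summand coming from the connect-sum pieces, so $X - \amalg N(\alpha_i)$ still has $H^1$ surjecting onto $H^1(Y_0)$, and the point is to understand the $\Z^n$ part. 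For each $\alpha_i$, the boundary $\partial N(\alpha_i) \cong S^1 \times S^2$ has $H^1 \cong \Z$, generated by the class dual to the $S^1$-factor (a meridian-type class), so $H^1(\amalg \partial N(\alpha_i)) \cong \Z^n$ and I need the restriction map to hit each generator.

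The key step is a Mayer–Vietoris / excision argument handled one $\alpha_i$ at a time. Decompose $X = (X - N(\alpha_i)) \cup N(\alpha_i)$, glued along $\partial N(\alpha_i) \cong S^1 \times S^2$. In both the $\eta_i$ case and the $\gamma_i$ case, the curve $\alpha_i$ is nullhomologous in $X$ (the $\gamma_i$ bound 2-handles already present, hence disks, in $W_1 \subset X$, and are certainly nullhomologous; the $\eta_i = S^1 \times \pt$ in an $S^1\times S^3$ summand is nullhomologous since $H_1(S^1\times S^3) = 0$ — wait, $H_1(S^1 \times S^3) = \Z$, but $\eta_i$ generates it, so here I instead use that $S^1 \times S^3$ minus $N(\eta_i)$ deformation retracts appropriately). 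The cleanest route: show that $\alpha_i$ bounds an embedded surface (a disk in the $\gamma_i$ case coming from the 2-handle core doubled, and $\{\pt\}\times S^3$ restricted appropriately, or simply a Seifert-type surface) in the complement, equivalently that the meridian class of $\partial N(\alpha_i)$ is in the image of $H^1(X - \amalg N(\alpha_i))$. Concretely, a properly embedded surface $F_i \subset X - \amalg N(\alpha_j)$ with $\partial F_i$ a meridian of $\alpha_i$ and disjoint from the other $N(\alpha_j)$ gives, by Alexander duality / intersection pairing, a cohomology class restricting to the generator on $\partial N(\alpha_i)$ and to $0$ on $\partial N(\alpha_j)$ for $j \neq i$. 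For the $\gamma_i$: the 2-handle attached along $\gamma_i$ in $W_1$ provides a disk; doubling and pushing off yields such an $F_i$ (a sphere punctured once). For the $\eta_i$: in the summand $S^1\times S^3$, the sphere $\{\pt\}\times S^3$ meets $\eta_i = S^1\times\{\pt'\}$ transversely in one point, so deleting $N(\eta_i)$ turns it into a once-punctured $S^3 = B^3$ — a $3$-ball whose boundary is a meridian $2$-sphere of $\eta_i$; this is exactly the surface $F_i$ needed, and it is disjoint from all other $N(\alpha_j)$ since the summands are disjoint.

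Assembling these, the classes $[F_1],\dots,[F_n] \in H^1(X - \amalg N(\alpha_i))$ (via the map "count intersections with $F_i$") restrict to a basis of $H^1(\amalg \partial N(\alpha_i)) \cong \Z^n$, giving surjectivity. The main obstacle I anticipate is purely bookkeeping: verifying that the doubled-disk surface for $\gamma_i$ can genuinely be made properly embedded and disjoint from the other surgery neighborhoods simultaneously, and that its boundary is precisely a generator (not a multiple) of $H^1(\partial N(\gamma_i))$ — this reduces to the $\gamma_i$ being primitive, hence nullhomologous-with-disk, which holds since they are $2$-handle attaching circles for $W$ and one may double the core. Everything else is a standard Mayer–Vietoris computation, and I would present it as: (i) reduce to treating each $\alpha_i$ separately by disjointness; (ii) exhibit $F_i$ in the two cases; (iii) conclude via the intersection pairing that restriction is surjective.
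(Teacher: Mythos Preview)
Your argument for the $\eta_i$ case is fine, but the $\gamma_i$ case contains a genuine error. You claim that the $\gamma_i$ ``bound 2-handles already present, hence disks, in $W_1 \subset X$'', but this is false: recall that $X = D(W_1)$ is the double of the \emph{1-handle} cobordism $W_1$, and the 2-handles of $W$ lie in $W_2$, which is not part of $X$ at all. So there is no disk for $\gamma_i$ to bound in $X$. In fact the $\gamma_i$ are homologically \emph{nontrivial} in $X$: since $W$ is a $\Z$-homology cobordism, attaching the 2-handles along the $\gamma_i$ must kill the $H_1$ created by the 1-handles, so after handle slides each $\gamma_i$ runs over the $i$th 1-handle algebraically once. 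In particular $[\gamma_i]$ projects to a basis element of the $\Z^n$ summand of $H_1(X) \cong H_1(Y_0)\oplus\Z^n$, and certainly does not vanish.

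This matters because your whole strategy for $\gamma_i$ is built on null\-homology, and even the dimensions become confused (the ``sphere punctured once'' you propose for $F_i$ is a 2-manifold, whereas a class in $H^1$ of a 4-manifold is represented by a properly embedded 3-manifold, as you correctly do in the $\eta_i$ case). The paper's argument exploits exactly the nontriviality you missed: because $\gamma_i$ goes over the $i$th 1-handle algebraically once, the composition $H^1(\#^n(S^1\times S^3)) \to H^1(X) \to H^1(\amalg \partial N(\gamma_i))$ is an isomorphism, so $H^1(X) \to H^1(\amalg \partial N(\gamma_i))$ is surjective, and the result follows from the factorization through $H^1(X-\amalg N(\gamma_i))$. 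If you want to salvage your geometric approach, the 3-manifold $F_i$ you seek for $\gamma_i$ should be the same $\{\pt\}\times B^3$ you used for $\eta_i$ (or a slight modification thereof), which intersects a push-off of $\gamma_i$ algebraically once precisely because $\gamma_i$ crosses the $i$th 1-handle once---but this is just a repackaging of the paper's argument.
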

\begin{proof}
Let $\alpha_1, \dots, \alpha_n$ be either set of curves. Inclusions induce the following commutative diagram:

\begin{center}
	\begin{tikzcd}
	H^1(X) \ar[r] \ar[d] &
		H^1(\amalg \partial N(\alpha_i) ) \\
	H^1(W - \amalg N(\alpha_i)) \ar[ur]
	\end{tikzcd}
\end{center}

The map $H^1(X - \amalg N(\alpha_i)) \ra H^1(\amalg \partial N(\alpha_i))$ is surjective if the map $H^1(X) \ra H^1(\amalg \partial N(\alpha_i))$ is. The curve $\eta_i$ runs over the $i$th 1-handle geometrically once, and since $W$ is a $\Z$-homology cobordism, after some handle slides, we can arrange that the curve $\gamma_i$ runs over the $i$th 1-handle algebraically once. In either case, this implies that the composition
\[
H^1(\#^n(S^1\times S^3)) \ra H^1(X) \ra H^1(\amalg \partial N(\alpha_i)),
\]
is an isomorphism. Therefore, the map $H^1(X) \ra H^1(\amalg \partial N(\alpha_i))$ is surjective as desired.\\
\end{proof}

We can now establish the theorem for the case where $(W, \Sigma)$ is concordance Morse-trivial.

\begin{prop}\label{prop: trivial concordance in W}
Suppose $(W, \cF): (Y_0, K_0) \ra (Y_1, K_1)$ is a ribbon $\Z$-homology concordance which is concordance Morse-trivial where $\cF = (C, \cA)$ is the concordance decorated by a pair of parallel arcs. Then, every $\frs\in \SpinC(W)$ has a unique extension $D(\frs)\in \SpinC(D(W))$ and the map induced by the double of $(W, \cF)$ \[F_{D(W), D(\cF), D(\frs)}: \cHFL^-(Y_0, K_0, \frs|_{Y_0}) \ra \cHFL^-(Y_0, K_0, \frs|_{Y_0})\] is the identity.
\end{prop}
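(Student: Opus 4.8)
The plan is to realize the identity cobordism $Y_0 \times [-1,1]$ and the double $D(W)$ as two different surgeries on the common manifold $X = D(W_1) \cong (Y_0 \times [-1,1]) \#^n(S^1 \times S^3)$, and then to match the induced link cobordism maps using Proposition \ref{prop: surgery preserves link cob map}. First I would fix the decorated surface. Since $(W,\cF)$ is concordance Morse-trivial, $C$ is unknotted with respect to $h|_C$, so after isotopy the concordance inside $X$ meets each surgery sphere trivially; concretely, the attaching curves $\gamma_1,\dots,\gamma_n$ of the $2$-handles of $W$ and the core curves $\eta_1,\dots,\eta_n$ of the $S^1\times S^3$ summands can all be taken to lie in the complement of $C$ and of the dividing arcs $\cA$, with the relevant $1$-manifolds in the glued-in pieces being unknotted arcs. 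Thus $\cF$ extends over $X$ to a decoration $\cF_X$ (a product annulus with two parallel dividing arcs, away from the surgery regions), and performing surgery on the $\eta_i$ recovers $(Y_0\times[-1,1], \cF_{Y_0})$, the trivial link cobordism, while performing surgery on the $\gamma_i$ recovers $(D(W), D(\cF))$ by Lemma \ref{lemma: Surgery on doubles} (applied with decorations, as in the discussion of Figure \ref{Fig: Surgery Decorations}).

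Next I would invoke the homological input. By Lemma \ref{lemma: restriction maps surjective}, for either collection of curves $\alpha_1,\dots,\alpha_n \in \{\eta_i\}$ or $\{\gamma_i\}$, the restriction map $H^1(X - \amalg N(\alpha_i)) \ra H^1(\amalg \partial N(\alpha_i))$ is surjective. Hence Proposition \ref{prop: surgery preserves link cob map} applies in both cases: there is a unique $\SpinC$-extension in each case, and
\[
F_{X, \cF_X, \frs_X} \simeq F_{Y_0\times[-1,1],\, \cF_{Y_0},\, \frt} \qquad\text{and}\qquad F_{X, \cF_X, \frs_X} \simeq F_{D(W),\, D(\cF),\, D(\frs)},
\]
where $\frs_X = \frs'|_{X}$ for an appropriate $\frs' \in \SpinC(X)$, $\frt$ is the induced $\SpinC$-structure on $Y_0\times[-1,1]$, and $D(\frs)$ is the induced one on $D(W)$. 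Since $Y_0\times[-1,1]$ has only the trivial (torsion) $\SpinC$-structure in the relevant sense and the cobordism is the product, $\frt$ restricts to $\frs|_{Y_0}$, so by the functoriality axiom for the link Floer TQFT, $F_{Y_0\times[-1,1], \cF_{Y_0}, \frt} = \id_{\cHFL^-(Y_0, K_0, \frs|_{Y_0})}$. Chaining the two chain homotopy equivalences then gives $F_{D(W), D(\cF), D(\frs)} \simeq \id$, which is the claim. The uniqueness of the extension $D(\frs)$ follows from the $\SpinC$ bookkeeping already contained in Proposition \ref{prop: surgery preserves link cob map} together with the fact that $W$ (hence $D(W)$) is a $\Z$-homology cobordism, so $H^2(D(W)) \ra H^2(W)$ is controlled and the surgery description pins down $D(\frs)$ uniquely.

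The main obstacle I expect is the geometric bookkeeping in the first step: verifying carefully that the decorated surface $\cF$ genuinely extends over all the surgery regions with the correct decoration, i.e. that the arcs $\cA$ and the surface $C$ can be isotoped off the surgery curves $\gamma_i$ and $\eta_i$ simultaneously and that surgery along $\gamma_i$ produces exactly $D(\cF)$ with its doubled decoration (rather than some other decoration differing by a twist or an extra dividing curve). This is where the hypotheses "concordance Morse-trivial" and "$\cA$ a pair of parallel arcs" are doing real work, and it is essentially the relative-cobordism refinement of \cite[Proposition 5.1]{DLVW_ribbon}; I would handle it by choosing the self-indexing Morse function so that $C$ lies in a single collar level, placing all the handle attachments of $W$ (and their doubles) away from this collar, and then checking the local models in Figure \ref{Fig: Surgery Decorations} piece by piece. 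Once that is set up, the rest is a formal application of the composition law and Proposition \ref{prop: surgery preserves link cob map}.
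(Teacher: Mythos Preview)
Your plan has a genuine gap in the first step. Proposition \ref{prop: surgery preserves link cob map} only applies to surgery on closed curves that lie \emph{in the dividing set} $\cA$; the surgery is performed simultaneously on $W$ and on $\Sigma$, replacing a tube in $\Sigma$ by two disks as in Figure \ref{Fig: Surgery Decorations}. You are instead proposing to take $\cF_X$ to be a product annulus disjoint from the $\eta_i$ and $\gamma_i$ and then surger only the ambient manifold. That is not what the proposition says, and in fact the conclusion fails in your setup: with $\cF_X$ a product annulus in $X = (Y_0\times[-1,1])\#^n(S^1\times S^3)$ disjoint from the $S^1\times S^3$ summands, the map $F_{X,\cF_X,\frs}$ factors through $F_{(S^1\times S^3)\setminus D^4,\,\cD}$, which is zero (this is exactly the computation at the end of the paper's proof). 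So $F_{X,\cF_X,\frs}=0$, and it cannot be chain homotopic to the identity on $\cHFL^-(Y_0,K_0)$ obtained after $\eta$--surgery.

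The paper's proof circumvents this by first tubing tori onto the product annulus so that the curves $\eta_i$ (respectively $\gamma_i$) actually sit on $\cA$; this is the content of Section \ref{Section: Homology Action} and is why Lemma \ref{lemma: homology action} is needed. After surgery these tori become nullhomologous spheres, which are then removed via Proposition \ref{prop: sphere tubing}. The two resulting decorated cobordisms $(X,\cF_\eta)$ and $(X,\cF_\gamma)$ are \emph{different} decorated surfaces in $X$, and a separate argument (the $H_1/\tors$--action identification together with the exterior-algebra equality $[\eta_1]\wedge\cdots\wedge[\eta_n]=[\gamma_1]\wedge\cdots\wedge[\gamma_n]$ modulo $\langle H_1(Y_0)/\tors\rangle$ from \cite{DLVW_ribbon}) is required to show they induce the same map. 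Your outline skips both of these ingredients, and without them the two surgeries cannot be compared.
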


\begin{proof}
Decompose $W$ as $W_1 \cup W_2$ where $W_i$ is the cobordism corresponding to the attachment of the $i$-handles, $i = 1, 2$. Let $\widetilde{Y} = \partial_+ W_1$ which can be identified with $Y_0 \#^n (S^1 \times S^2)$, where $n$ is the number of $1$-handles (and since $W$ is a $\Z$-homology cobordism, $n$ is also the number of 2-handles). Let $X = \overline{W_1}\cup W_1$ be the double of $W_1$, which is diffeomorphic to $(Y_0 \times [-1,1])\#^n (S^1 \times S^3)$. Define a decorated surface $\cF_\alpha = (\Sigma_\alpha, \cA_\alpha)$ in $X$ as follows: Let $\alpha_1, \dots, \alpha_n$ be a collection of curves in $X$. Isotope each $\alpha_i$ so that it is embedded in $\widetilde{Y}.$ In Section \ref{Section: Homology Action}, we constructed a decorated surface $\cF_{\alpha_i}\sub \widetilde{Y}\times [-1,1]$ which realized the action of $\alpha_i$. Define $\cF_\alpha$ to be the decorated surface obtained stacking these surfaces on top of one another, i.e. $\cF_\alpha = \cF_{\alpha_n} \cup \dots \cup \cF_{\alpha_1}$. Let $(X, \cF_X) = (\overline{W_1}, \overline{\cF\cap W_1}) \cup (\widetilde{Y} \times [-1,1], \cF_\alpha) \cup (W_1, \cF \cap W_1)$. \\

Let $\eta_i$ be the curve $S^1 \times \pt$ in the $i$th $S^1 \times S^3$ summand of $X$. By taking $\alpha_i$ to be $\eta_i$, we obtain a decorated surface $\cF_\eta$ in $X$, with the curves $\eta_1, \dots, \eta_n \sub \cA_\eta$. Apply Proposition \ref{prop: surgery preserves link cob map} to see that
\[
F_{X, \cF_\eta, \frs} \simeq F_{X(\eta_1, \dots, \eta_n), \cF_\eta(\eta_1, \dots, \eta_n), \frs(\eta_1, \dots, \eta_n)}.
\]
Doing surgery on $X$ along the curves $\eta_1, \dots, \eta_n$ yields $Y_0 \times [-1,1] \#^n S^4$ which is, of course, diffeomorphic to the product $Y_0\times [-1,1]$. Recall that $\cF_\eta$ was defined by tubing on tori decorated by parallel copies of $\eta_i$. Doing surgery on the curves $\eta_i$ in these tori yields spheres embedded in the $S^4$ summands (and are therefore nullhomologous.) Therefore, $(Y_0\times[-1,1],\cF_\eta(\eta_1, \dots, \eta_n))$ can be built from the Morse-trivial link cobordism $(Y_0\times[-1,1],\cF_{Y_0\times [-1,1]})$ by tubing on a collection of spheres. By Proposition \ref{prop: sphere tubing}, the link cobordism map does not detect tubing on nullhomologous spheres, and hence
\[
F_{X(\eta_1, \dots, \eta_n),\cF_\eta(\eta_1, \dots, \eta_n), \frs(\eta_1, \dots, \eta_n)} \simeq F_{Y_0\times [-1,1], \cF_{Y_0\times [-1,1]}, \frs(\eta_1, \dots, \eta_n)}.
\]

By Lemma \ref{lemma: Surgery on doubles}, $D(W)$ can be obtained from $X$ by doing surgery on the attaching curves $\gamma_1, \dots, \gamma_n$ of the 2-handles of $W$. Now take the $\alpha_i$ to be the attaching curves $\gamma_1, ..., \gamma_n$ for the 2-handles for $W$, and consider the decorated link cobordism $\cF_\gamma$ in $X$ which realizes the action of $\gamma_1, \dots, \gamma_n$. Just as above, Proposition \ref{prop: surgery preserves link cob map} shows:
\[
F_{X, \cF_\gamma, \frs} \simeq F_{X(\gamma_1, \dots, \gamma_n), \cF_\gamma(\gamma_1, \dots, \gamma_n), \frs(\gamma_1, \dots, \gamma_n)} \simeq F_{D(W), \cF_\gamma(\gamma_1, \dots, \gamma_n), \frs(\gamma_1, \dots, \gamma_n)}.
\]
Again, the surface $\cF_\gamma(\gamma_1, \dots, \gamma_n)$ is obtained by tubing on the spheres that arise by doing surgery on the tori in $\cF_\gamma$. Here is a geometric argument that these spheres are nullhomologous in $D(W)$. The torus $T_i$ corresponding to $\gamma_i$ was defined by isotoping $\gamma_i$ into $\partial_+ W_1$ and taking the boundary of a regular neighborhood of $\gamma_i$ in $\partial_+ W_1$. Hence, $[T_i] = 0 \in H_2(X)$. By attaching a thickened disk along a meridian of $\gamma_i$, we obtain cobordism from $T_i$ to the sphere $S_i$ obtained by surgery on $\gamma$. Therefore, $[S_i] = [T_i] = 0$ in $H_2(X)$. Since $S_i$ is disjoint from $\gamma_i$ it also represents a class in $H_2(X - N(\gamma_i))$. $X$ is obtained from $X-N(\gamma_i)$ by attaching a 3- and 4-handle, so the relative homology group $H_1(X, X-N(\gamma_i)) = 0$, implying the map induced by inclusion $H_2(X-N(\gamma_i)) \ra H_2(X)$ is injective. In particular, this means $[S_i]$ is trivial in $H_2(X-N(\gamma_i))$, and therefore trivial in $H_2(X(\gamma_i))$ as well. Applying this argument to each $\gamma_i$ shows that all spheres $S_i$ are nullhomologous in $D(W)$.

Therefore, we can again apply Proposition \ref{prop: sphere tubing} to see that 
\[
F_{D(W), \cF_\gamma(\gamma_1, \dots, \gamma_n), \frs(\gamma_1, \dots, \gamma_n)} \simeq F_{D(W), D(\cF), \frs(\gamma_1, \dots, \gamma_n)},
\]
where $D(\cF)$ is the decorated cobordism obtained by doubling $\cF$. Since $(W, \cF)$ was concordance Morse-trivial, $(D(W), D(\Sigma))$ is as well. \\

Altogether, we have shown that the maps induced by $(Y_0\times[-1,1], \cF_{Y_0\times[-1,1]})$ and $(D(W), D(\cF))$ are chain homotopic to the maps induced by the link cobordisms $(X, \cF_\eta)$ and $(X, \cF_\gamma)$ respectively. Hence, we simply need to show that
\[
F_{X, \cF_\eta, \frs} \simeq F_{X, \cF_\gamma, \frs}.
\]
By Lemma \ref{lemma: homology action}, we have that 
\[
F_{X, \cF_\eta, \frs}(\cdot) \simeq F_{X, \cF_X, \frs}([\eta_1]\wedge \dots \wedge [\eta_n] \otimes \cdot)
\]
and 
\[
F_{X, \cF_\gamma, \frs}(\cdot) \simeq F_{X, \cF_X, \frs}([\gamma_1]\wedge \dots \wedge [\gamma_n] \otimes \cdot),
\]
where $(X, \cF_X)$ is concordance Morse-trivial. We will show that
\[
F_{X, \cF_X, \frs}([\eta_1]\wedge \dots \wedge [\eta_n] \otimes \cdot) \simeq F_{X, \cF_X, \frs}([\gamma_1]\wedge \dots \wedge [\gamma_n] \otimes \cdot),
\]

This is effectively proven in \cite[Theorem 4.10]{DLVW_ribbon}, but we will recall their argument for the convenience of the reader. By \cite[Proposition 5.1]{DLVW_ribbon}, 
\[[\eta_1] \wedge \dots \wedge [\eta_n]=[\gamma_1]\wedge \dots \wedge [\gamma_n] \in \left(\Lambda^*H_1(X)/\tors/ \langle H_1(Y_0)/\tors\rangle \right) \otimes \F_2,\] where $\langle H_1(Y_0)/\tors\rangle$ is the ideal generated by elements of $H_1(Y_0)/\tors$. Therefore, $[\eta_1] \wedge \dots \wedge [\eta_n]$ and $[\gamma_1]\wedge \dots \wedge [\gamma_n]$ differ by an element of $\Lambda^n(H_1(X)/\tors)\cap \langle H_1(Y_0)/\tors \rangle \otimes \F_2.$ By the linearity of the link cobordism maps, it suffices to show that $F_{X, \cF_X, \frs}(\x\otimes \xi) = 0$ for any $\xi \in \Lambda^n(H_1(X)/\tors)\cap \langle H_1(Y_0)/\tors \rangle \otimes \F_2.$\\

$\Lambda^n(H_1(X)/\tors)\cap \langle H_1(Y_0)/\tors \rangle \otimes \F_2$ is generated by elements of the form $\omega \wedge \left( \bigwedge_{i \in S} [\eta_i]\right)$ where $\omega$ is a wedge of elements in $H_1(Y_0)/\tors$ and $S$ is a proper subset of $\{1, ..., n\}$. So, we can take $\xi$ to be of this form. Since $S$ is a proper subset of $\{1, ..., n\}$ there is some $\eta_j$ which does not appear as a factor of $\xi$. We can realize the map $F_{X, \cF_X, \frs}(\xi \otimes \cdot)$ as a link cobordism map $F_{X, \cF_\xi, \frs}.$ By our construction of $\cF_\xi$, we can assume that $\cF_\xi$ is disjoint from the $j$th $S^1 \times S^3$ summand, of which $\eta_j$ is the core. Therefore, $(X, \cF_\xi)$ can be decomposed as $(X - T_j, \cF_\xi - \cD)\cup (T_j, \cD)$, where $T_j = (S^1 \times S^3) - D^4$ is the $j$th $S^1 \times S^3$ summand, and $\cD$ is a disk decorated with a single dividing arc. Hence, the map $F_{X, \cF_\xi, \frs}$ factors through $F_{T_j, \cD, \frs|_{T_j}}$. 

But, the map $F_{T_j, \cD, \frs|_{T_j}}$ is trivial. This map can be computed as the following composition: first, a 0-handle with a birth disk (which is identified with $\cD$) is born, then a 1-handle is attached with both feet on the 0-handle, with a trivially embedded annulus followed by 3-handle, again with a trivial annulus. The 1- and 3-handle maps take $\x$ to $\x \otimes \theta^+$ and $\x \otimes \theta^+$ to zero respectively, where $\theta^+$ is the top graded generator of $\cCFL^-(S^1\times S^2, \U, \frt_0)$.\\

Therefore, it has been established that
\[
F_{Y_0\times [-1,1], \cF_{Y_0\times [-1,1]}, \frs(\eta_1, \dots, \eta_n)} \simeq F_{D(W), D(\cF), \frs(\gamma_1, \dots, \gamma_n)}.
\]
The map on the left is induced by a Morse-trivial link cobordism, and therefore induces the identity map on $\cHFL^-(Y_0, K_0, \frt_0)$, where $\frt_0$ is the restriction of $\frs(\eta_1, \dots, \eta_n)$ to $Y_0$. $\SpinC$-structures on $Y_0$ extend uniquely over $W$ and $D(W)$ since they are $\Z$-homology cobordisms. Therefore $\frs(\gamma_1, ..., \gamma_n)$ \emph{is} the unique $\SpinC$ structure extending $\frs$ over $D(W)$. Therefore, for any $\frs\in \SpinC(W)$, the map $F_{D(W), D(\cF), D(\frs)}$ induces the identity.
\end{proof}

Having completed the proof in the case that $(W, \Sigma)$ is concordance Morse-trivial, it suffices to show that we can always reduce to this case. Given any concordance $C \sub W$, there is a Morse function $f$ on $W$ with the property that $f|_C$ has no critical points (for instance, simply identify $C \cong S^1\times I$ and extend the projection $S^1 \times I \ra I$ to a Morse function on $W$.) Moreover, the extension can be chosen in such a way as to give $W$ the structure of a ribbon cobordism. The following is well known; see for example, \cite[Chapter 6]{GS_4mflds} or \cite{millerzemke_stronglyhomotopy}.

\begin{prop}[\cite{GS_4mflds,millerzemke_stronglyhomotopy}]\label{prop: ribbon disks fusion number}
Let $K_0$ and $K_1$ be ribbon concordant in a homology cobordism $W$. Then, there is handle decomposition of the pair $(W, \Sigma)$ with the property that $W$ is ribbon and the handle decomposition for $\Sigma$ is trivial.
\end{prop}

\begin{figure}[h] 
     \centering
          \includegraphics[scale=.15]{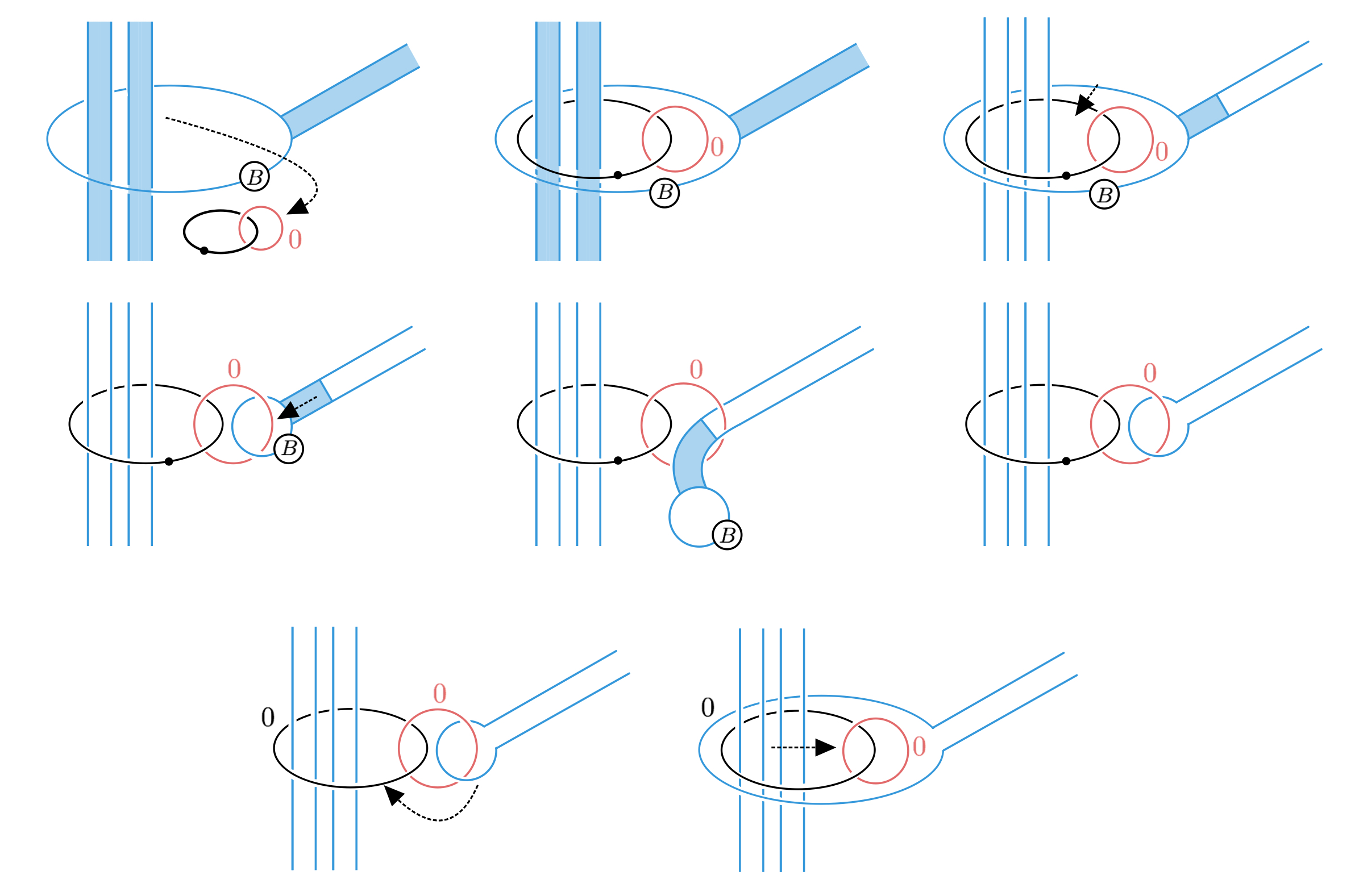}
               \caption{A procedure for trading handles of a concordance for handles of the ambient manifold.}
                    \label{fig: trading critical points}
\end{figure}

\begin{proof}[Proof of Theorem 1.] This now follows immediately from the previous propositions. For any ribbon $\Z$-homology concordance $(W, \cF)$, the doubled link cobordism induces the same map as a concordance Morse-trivial link cobordism $(D(W), \cF')$, and this map induces the identity map on $\cHFL^-(Y_0, K_0, \frs|_{Y_0})$. As $\frs$ extends uniquely over $D(W)$, the composition law implies that $F_{W, \cF, \frs}$ has a left inverse, namely $F_{\overline{W}, \overline{\cF}, \frs}$. Hence, $F_{W, \cF, \frs}$ is a split injection.
\end{proof}

\section{Torsion and Link Floer Homology}\label{section: torsion and link floer}

Let $\CFL^-(Y, K, \frs)$ be the $\F_2[V]$-module obtained from $\cCFL^-(Y, K, \frs)$ by setting $U = 0$ with differential {}
\[
\partial(\x) = \sum_{\y\in \T_\al\cap \T_\be}\sum_{\substack{\phi \in \pi_2(\x, \y),\\ \mu(\phi) =1, \\ n_\w(\phi) = 0}} \# \widehat{\mathcal{M}}(\phi)V^{n_\z(\phi)}\y.
\]
Let $\HFL^-(Y, K, \frs)$ be the homology of this complex.

A key property of the link Floer TQFT which is utilized in \cite{JMZ_TorsionOrder} is the following.

\begin{lemma}\cite[Lemma 3.1]{JMZ_TorsionOrder}\label{lemma: adding a tube}
Let $(W, \cF)$ be a decorated link cobordism. Let $\cF_V$ be a link cobordism obtained by adding a tube to the $\Sigma_\z$ region. Then, 
\[
F_{W, \cF_V, \frs} \simeq V\cdot F_{W, \cF, \frs}.
\] 
\end{lemma}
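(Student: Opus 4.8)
The plan is to prove \Lemma{adding a tube} by a local computation near the tube, following the strategy of \cite[Lemma 3.1]{JMZ_TorsionOrder}. The point is that adding a tube to the $\Sigma_\z$ region is a purely local modification of $\cF$: it can be realized by composing with an elementary cobordism supported inside a ball $B^4 \subset W$ that meets $\Sigma$ in a trivial disk. So first I would factor $F_{W, \cF_V, \frs}$ through the neighborhood of this ball using the composition law, reducing the claim to a model computation in $(B^4, D)$ where $D$ is a disk meeting the $\z$-region, with a tube attached to $D$ inside $B^4$.

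Next I would carry out the model computation. Attaching a tube whose feet lie in $\Sigma_\z$ can be decomposed into Zemke's elementary link cobordism pieces: a birth (introducing a small unknotted $\z$-decorated circle bounding a disk entirely in the $\z$-region), a pair of saddles connecting that circle to $\Sigma$, and a death — equivalently, a $1$-handle attachment to the surface with both feet in $\Sigma_\z$, together with a stabilization/destabilization on the Heegaard data. The key input is the explicit formula for the elementary cobordism maps from \cite{zemke_linkcob} (the same toolkit used in \Section{Homology Action}): a birth in the $\z$-region introduces a generator, the saddle maps are modeled on the multiplication/comultiplication maps on $\F_2[U,V]$-modules, and composing them yields exactly multiplication by $V$. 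Concretely, the relevant $\z$-region saddle contributes a factor of $V$ while the $\w$-count is unchanged, since the tube's feet avoid $\Sigma_\w$. This is where one compares with \Lemma{H1 Action on S1x S2} in spirit: the bookkeeping of $\theta^\pm$ and the $U^{n_\w}V^{n_\z}$ weights is the same flavor of argument.

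Then I would glue back: by the composition law \cite[Theorem B]{zemke_linkcob}, $F_{W,\cF_V,\frs} \simeq F_{W - B^4, \cF - D, \frs} \circ (\text{local tube map}) \simeq F_{W-B^4, \cF-D,\frs}\circ (V \cdot F_{B^4, D, \frs|_{B^4}})$, and since $V$ is central in $\F_2[U,V]$ and the cobordism maps are $\F_2[U,V]$-module homomorphisms, the $V$ factors out of the composition to give $V \cdot F_{W,\cF,\frs}$. One should note the $\SpinC$ bookkeeping is trivial here: the ball contributes only the restricted $\SpinC$-structure and there is no summation issue.

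The main obstacle I expect is pinning down the precise elementary-cobordism decomposition of ``adding a tube to $\Sigma_\z$'' and matching the resulting composite against the $V$-multiplication map — i.e.\ getting the normalization exactly right so that it is $V^1$ and not $V^0$ or a sum of such terms. This requires care with the decorations: one must check that both feet of the tube genuinely lie in $\Sigma_\z$ (so that the $\z$-subsurface gains a handle and the $\w$-subsurface is unchanged), that the auxiliary birthed circle is decorated so as to lie entirely in the $\z$-region, and that the two saddles are arranged so that exactly one of them crosses a $\z$-basepoint region in the relevant Heegaard picture. Everything else — invariance, the composition law, centrality of $V$ — is formal once the local model is identified, so the weight of the proof is in this local analysis, which is exactly the content of \cite[Lemma 3.1]{JMZ_TorsionOrder} and can be cited or reproduced in a few lines.
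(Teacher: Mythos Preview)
Your approach is essentially the same as the paper's: localize to a 4-ball neighborhood of the tube via the composition law and reduce to a model computation using Zemke's elementary cobordism maps. The only minor discrepancy is in the local model --- the paper takes $B^4$ to meet $\Sigma$ in \emph{two} disks (one for each foot of the tube), so that $\cF_V \cap B^4$ is an annulus connecting them, and the computation reduces simply to composing two $\z$-band maps (a merge followed by a split), which is carried out in \cite[Section~8.2]{zemke_linkcob} and yields exactly multiplication by $V$; no auxiliary births or deaths are needed.
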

\begin{proof}
Choose a neighborhood the tube diffeomorphic to the 4-ball. By the composition law, it suffices to show that 
\[
F_{B^4, \cF_V \cap B^4} \simeq V\cdot F_{B^4, \cF},
\]
where $\cF$ is a pair of disks which bound a 2-component unlink in $\partial B^4$ decorated by a dividing arc. $\cF_V \cap B^4$ is obtained from $F_{B^4, \cF}$ by adding a tube connecting the two disk with feet in the $\z$-region. This map can be computed as the composition of two $\z$-band maps. This computation is carried out in \cite[Section 8.2]{zemke_linkcob}, and the resulting map is multiplication by $V$.
\end{proof}

With this tool at our disposal, we can prove an analogue of \cite[Proposition 4.1]{JMZ_TorsionOrder}.

\begin{prop}\label{prop: computation of ribbon link cobordisms}
Let $(W, \Sigma): (Y_0, K_0) \ra (Y_1, K_1)$ be a $\Z$-homology link cobordism. Let $h: W \ra \R$ be a Morse function compatible with $\Sigma$ with respect to which $W$ is ribbon. Suppose that $h|_\Sigma$ has $m$ critical points of index 0, $b$ critical points of index 1, and $M$ critical points of index 2. Let $\cF$ be a decoration of $\Sigma$ such that $\Sigma_\w$ is a regular neighborhood of an arc from $K_0$ to $K_1$. Then, 
\[
V^M\cdot F_{D(W), D(\cF), D(\frs)} \simeq V^{b-m}\cdot \id_{\cHFL^-(Y_0, K_0, \frs|_{Y_0})}.
\]
\end{prop}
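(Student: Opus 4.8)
The plan is to compute $F_{D(W),D(\cF),D(\frs)}$ by simplifying the doubled link cobordism, exactly as in the proofs of Proposition \ref{prop: double of ribbon is trivial} and \cite[Proposition 4.1]{JMZ_TorsionOrder}. Fix a self-indexing Morse function $h$ realizing the ribbon structure on $W$ together with the stated critical-point counts for $h|_\Sigma$, and give $D(W)=W\cup_{Y_1}\overline{W}$ the doubled Morse function. This presents $(D(W),D(\cF))$ as a composition of the $1$- and $2$-handle cobordisms of $W$, the handle cobordisms of the $m$ minima, $b$ saddles, and $M$ maxima of $h|_\Sigma$, then the mirror images of these for $\overline{\Sigma}$, and finally the dual $2$- and $3$-handle cobordisms of $\overline{W}$. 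Since $\partial\Sigma$ has two components, $\chi(\Sigma)=m-b+M=-2g(\Sigma)$, so $b-m=M+2g(\Sigma)$ and it is enough to show $V^{M}\cdot F_{D(W),D(\cF),D(\frs)}\simeq V^{M+2g(\Sigma)}\cdot\id$.

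As in the proof of Proposition \ref{prop: double of ribbon is trivial}, I would first arrange, by band slides, that in each cross section of $D(W)$ the core knot $K_0$ is visible, cutting out a trivial concordance $C'$ in $D(W)$ to which the rest of $D(\Sigma)$ is attached. The doubled minimum disks of $\Sigma$ (a minimum of $\Sigma$ together with the mirror maximum of $\overline{\Sigma}$) are spheres attached to $C'$ by the tubes formed from the associated band and its dual; since $W$ is a $\Z$-homology cobordism, these spheres are nullhomologous by the argument of Lemma \ref{lemma: doubled disks are nullhomologous in ZH cob}, so by Proposition \ref{prop: sphere tubing} they may be removed without changing the induced map. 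The same applies to the doubled maximum disks of $\Sigma$ (a maximum of $\Sigma$ together with the mirror minimum of $\overline{\Sigma}$, which sit back to back along the reflecting copy of $Y_1$). After all these spheres and their tubes are removed, $D(\Sigma)$ has been reduced to $C'$ together with the $2g(\Sigma)$ tubes accounting for the genus contributed by the saddles of $\Sigma$ and $\overline{\Sigma}$; because $\Sigma_\w$ is a neighborhood of a single arc, every one of these tubes lies in the $\Sigma_\z$ region, and $C'$ inherits a decoration $\cF'$ by a pair of parallel arcs.

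By Lemma \ref{lemma: adding a tube}, each of the $2g(\Sigma)$ tubes in the $\Sigma_\z$ region contributes a factor of $V$, so
\[
F_{D(W),D(\cF),D(\frs)}\;\simeq\;V^{2g(\Sigma)}\cdot F_{D(W),\cF',D(\frs)}.
\]
Now $(W,\cF')$ is a concordance Morse-trivial ribbon $\Z$-homology concordance decorated by a pair of parallel arcs, so Proposition \ref{prop: trivial concordance in W} gives that $\frs$ extends uniquely over $D(W)$ and that $F_{D(W),\cF',D(\frs)}\simeq\id_{\cHFL^-(Y_0,K_0,\frs|_{Y_0})}$. Multiplying the displayed identity by $V^M$ and using $b-m=M+2g(\Sigma)$ then yields $V^{M}\cdot F_{D(W),D(\cF),D(\frs)}\simeq V^{b-m}\cdot\id$, as claimed.

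The main obstacle is the simplification step: one must set up the handle decomposition of $\Sigma$ carefully enough that the doubled minimum and maximum disks really do appear as nullhomologous spheres attached to $C'$ by single tubes with feet disjoint from the dividing set, and verify the nullhomology as in Lemma \ref{lemma: doubled disks are nullhomologous in ZH cob} --- in particular using that $W$ is a $\Z$-homology cobordism, not merely that it is ribbon. Once $D(\Sigma)$ is reduced to $C'$ plus $2g(\Sigma)$ tubes in $\Sigma_\z$, the remaining $V$-count and the appeal to Proposition \ref{prop: trivial concordance in W} are routine.
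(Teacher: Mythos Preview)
There is a genuine gap in your treatment of the ``doubled maximum'' disks. A maximum of $h|_\Sigma$ is a death disk $D_j$ capping off a component $V_j$ at some level just below $Y_1$; its mirror in $\overline{\Sigma}$ is a birth disk $D_j'$ creating $V_j$ just above $Y_1$. Between these two levels the surface $D(\Sigma)$ meets each slice only in $K_1$: the component $V_j$ has been capped and not yet reborn. Hence $D_j$ and $D_j'$ are two \emph{disconnected} disks in $D(\Sigma)$, each attached to the rest of the surface along its own copy of $V_j$; they do not ``sit back to back along $Y_1$'' to form a sphere, and Proposition~\ref{prop: sphere tubing} does not let you remove them. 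This is not a matter of arranging the handle decomposition more carefully --- no isotopy of $D(\Sigma)$ will join $D_j$ to $D_j'$, since doing so would change the topology of the surface.

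This is exactly the asymmetry between minima and maxima that the paper's argument is built around. For a doubled minimum, the birth circle $U_i$ (possibly merged into the core) persists through all of $D(W)$, so the birth disk, its mirror death disk, and the track between them really do assemble into a sphere tubed onto $C'$. For a doubled maximum there is no such track. The paper deals with the maxima by \emph{adding} $M$ tubes in $\Sigma_\z$ joining each $D_j$ to $D_j'$, at a cost of $V^M$ via Lemma~\ref{lemma: adding a tube}; only after this are the death/dual-birth pairs converted to cylinders, and the surface can then be compared (after removing $b-m$ further tubes and sphere-tubing) to the double of a genuine ribbon homology concordance. This is precisely why the proposition asserts $V^M\cdot F_{D(W),D(\cF),D(\frs)}\simeq V^{b-m}\cdot\id$ rather than the stronger identity $F_{D(W),D(\cF),D(\frs)}\simeq V^{2g(\Sigma)}\cdot\id$ that your argument would produce; that stronger statement is not what is proved, and your route does not establish it.
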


\begin{proof}
The Morse function $h$ induces a movie presentation for $(W, \Sigma)$.

\begin{enumerate}
    \item $m$ birth disks appear disjoint from $K_0$, with boundaries $U_1, \dots, U_m$.
    \item $n$ 4-dimensional 1-handles are attached whose feet are disjoint from $\Sigma$.
    \item $m$ fusion bands $B_1$, ..., $B_n$ are attached which connect $K_0$ and $U_1, \dots, U_m$. After some band slides, the band $B_i$ has one foot in $U_i$ and the other in $K_0$.
    \item $b-m$ additional bands $B_{m+1}, ..., B_b$ are attached. 
    \item $n$ four-dimensional 2-handles are attached along curves $\gamma_1, \dots, \gamma_n $.
    \item $M$ death disks appear capping off unknotted components $U_1, \dots, U_M$ in the link obtained by doing band surgery.
\end{enumerate}
By playing this movie forward, and then again in reverse, we obtain a movie for $(D(W), D(\cF)).$ 

Consider the link cobordism which is obtained by deleting steps (5)-(8), i.e. remove the 2-handles, the deaths, the dual births, and the dual 2-handles. The resulting four-manifold $X$ is the double of the cobordism $W_1 = (Y_1\times[0,1])\cup \text{1-handles}$. The resulting surface is $D(\cF\cap W_1)$. Let $\cG_\gamma$ be the surface obtained from $D(\cF\cap W_1)$ by tubing on tori $T_1, ..., T_n$ which are the boundaries of regular neighborhoods of the $\gamma_i$ curves in $\partial_+ W_1$ as in the proof of Proposition \ref{prop: trivial concordance in W}. Surgery on the curves $\gamma_1, \dots, \gamma_n$ yields a link cobordism $(D(W), \cG)$. The decorated cobordism $\cG$ can be obtained from $D(\cF)$ by attaching $M$ tubes from the deaths disks to the dual birth disks and tubing on $n$ nullhomologous spheres which are the result of surgery on the tori $T_i$. We can arrange for the feet of the tubes to be sit in the subsurface $\Sigma_\z$. Attaching the nullhomologous spheres has no effect on the link cobordism map, and attaching the tubes has the result of multiplying by $V^M$ by Lemma \ref{lemma: adding a tube}. Therefore, by Proposition \ref{prop: surgery preserves link cob map},
\[
F_{X, \cG_\gamma, \frt} \simeq V^M\cdot F_{D(W), D(\cF), D(\frs)},
\]
where $\frt$ is determined by the fact that $\frt(\gamma_1, \dots, \gamma_n) = D(\frs)$.

Now consider the cobordism obtained by deleting steps (4)-(9) and again tubing on the tori corresponding to the $\gamma_i$ curves. The ambient four-manifold is still $X$, but removing steps (4) and (9) has the effect of removing the bands $B_{m+1}, ..., B_b$ and their duals from $\cG_\gamma$. Call this surface $\cH_\gamma$. Since the bands $B_{m+1}, ..., B_b$ and their duals form a collection of $b-m$ tubes, another application of Lemma \ref{lemma: adding a tube} shows
\[
F_{X, \cG_\gamma, \frt}\simeq V^{b-m}\cdot F_{X, \cH_\gamma, \frt}.
\]
But, surgery on $(X, \cH_\gamma)$ along $\gamma_1, \dots, \gamma_n$ yields the link cobordism $(D(W), \cH)$ which is the double of a ribbon homology concordance. So by Proposition \ref{prop: trivial concordance in W}
\[
F_{X, \cH_\gamma, \frt} \simeq \id_{\cCFL^-(Y_0, K_0, \frs|_{Y_0})}.
\]
Altogether then, we have that
\[
V^M\cdot F_{D(W), D(\cF), D(\frs)} \simeq V^{b-m}\cdot \id_{\cHFL^-(Y_0, K_0, \frs|_{Y_0})},
\]
as desired.
\end{proof}

\begin{proof}[Proof of Theorem \ref{theorem: tors bounds}]
This now follows by an argument identical to that of \cite[Theorem 1.2]{JMZ_TorsionOrder}.
\end{proof}

Unlike the inequality of \cite[Theorem 1.2]{JMZ_TorsionOrder}, this does \emph{not} give the symmetric result 
\[
\Ord_V(Y_1, K_1,\frs|_{Y_1}) \le \max\{m, \Ord_V(Y_0, K_0,\frs|_{Y_0})\} + 2g(\Sigma),
\] 
since $\overline{W}$ is not ribbon as we have defined it, unless $W = Y_0 \times [0, 1]$. 

\section{Applications}\label{section: applications}

We do have some immediate applications. Theorem \ref{thm: torsion bounds} gives a clear relationship between the torsion orders of ribbon homology cobordant knots. 

\begin{cor}
Let $(W, \Sigma, \frs): (Y_0, K_0) \ra (Y_1, K_1)$ be a ribbon $\Z$-homology cobordism, then
\[
\Ord_V(Y_0, K_0,\frs|_{Y_0}) - \Ord_V(Y_1, K_1,\frs|_{Y_1}) \le 2g(\Sigma).
\]
\end{cor}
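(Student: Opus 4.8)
The plan is to deduce this Corollary directly from Theorem~\ref{theorem: tors bounds}. First I would unwind the definitions: a ribbon $\Z$-homology cobordism $(W,\Sigma)$ is a $\Z$-homology link cobordism equipped with a Morse function $h: W \ra \R$ compatible with $\Sigma$ such that $h$ has no critical points of index $3$ or $4$ (so that $W$ is ribbon) and, by Definition~\ref{def: ribbon link cobordism}, $h|_\Sigma$ has no critical points of index $2$. Hence the hypotheses of Theorem~\ref{theorem: tors bounds} are satisfied for this $h$ with the index-$2$ count $M=0$.

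Next I would invoke the first inequality of Theorem~\ref{theorem: tors bounds}. Since $\Ord_V$ is defined as a minimum over $\N$ it is nonnegative, so $\max\{0,\Ord_V(Y_1,K_1,\frs|_{Y_1})\}=\Ord_V(Y_1,K_1,\frs|_{Y_1})$, and the theorem gives
\[
\Ord_V(Y_0,K_0,\frs|_{Y_0}) \le \Ord_V(Y_1,K_1,\frs|_{Y_1}) + 2g(\Sigma).
\]
Transposing the $\Ord_V(Y_1,K_1,\frs|_{Y_1})$ term is exactly the claimed bound, so the argument is complete in one or two lines.

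I do not anticipate a genuine obstacle: essentially all of the content lives in Theorem~\ref{theorem: tors bounds}, and the only thing requiring care is confirming that a ribbon $\Z$-homology cobordism really does supply a single Morse function meeting that theorem's hypotheses with no index-$2$ critical points on $\Sigma$ — which is immediate from Definition~\ref{def: ribbon link cobordism}. The one point I would state cleanly is the $\SpinC$ bookkeeping: Theorem~\ref{theorem: tors bounds} is written with $\Ord_V(Y_1,K_1)$ rather than $\Ord_V(Y_1,K_1,\frs|_{Y_1})$, so I would make sure the restriction $\frs|_{Y_1}$ is the decoration used throughout, which is harmless because $\frs$ restricts uniquely to each boundary component of a $\Z$-homology cobordism.
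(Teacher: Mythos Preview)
Your proposal is correct and takes essentially the same approach as the paper: the paper provides no explicit proof for this corollary, presenting it as an immediate consequence of Theorem~\ref{theorem: tors bounds}, and your argument (applying that theorem with $M=0$ and using $\Ord_V\ge 0$) is exactly the intended one-line deduction.
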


Recall that the \textit{fusion number} $\cF us(K)$ of a ribbon knot $K$ in $S^3$ is the minimal number of bands in a handle decomposition of ribbon concordance $C$ from the unknot $U$ to $K$ in $S^3\times [0, 1]$.  By \cite{JMZ_TorsionOrder}, the torsion order of $K$ in $S^3$ provides a lower bound for the fusion number of $K$. There are a few possible generalizations we will consider.  

Let $K$ be a knot in a 3-manifold $Y$. Suppose that that $K$ is ribbon in $Y$, in the sense that there is a concordance $(Y\times [0, 1], C): (Y\times \{0\}, U) \ra (Y\times \{1\}, K)$ where $U$ is the boundary of a disk in $Y$ and $C$ is an annulus which is ribbon with respect to the projection $Y \times [0, 1] \ra [0, 1]$.

\begin{defn}
 We define the \textit{fusion number of $K$ in $Y$}, which we denote $\cF us_Y(K)$, to be the minimal number of bands in a ribbon concordance from $U$ to $K$ in $Y\times [0, 1]$. 
\end{defn}

\begin{corollary}
If $K$ is ribbon in $Y$, then
\[
\Ord_V(Y, K, \frs) \le \cF us_Y(K).
\]
\end{corollary}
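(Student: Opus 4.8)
The plan is to apply the ``$W$ is a product'' case of Theorem~\ref{theorem: tors bounds} to a ribbon concordance that realizes the fusion number. Fix a ribbon concordance $(Y\times[0,1],C)\colon (Y,U_0)\ra(Y,K)$ from the unknot $U_0$ (which bounds a disk in $Y$) to $K$ with exactly $\cF us_Y(K)$ bands, and let $h\colon Y\times[0,1]\ra[0,1]$ be the projection, arranged so that $h|_C$ is Morse and induces the given handle decomposition of $C$. Then $W=Y\times[0,1]$ is a product, hence a ribbon $\Z$-homology cobordism, and $C$ is an annulus, so $g(C)=0$ and $h|_C$ has no index $2$ critical points. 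If $m$ and $b$ denote the numbers of index $0$ and index $1$ critical points of $h|_C$, then $0=\chi(C)=m-b$, so $m=b=\cF us_Y(K)$. Since a $\SpinC$-structure on $Y$ restricts to itself on both ends of $Y\times[0,1]$, the second inequality of Theorem~\ref{theorem: tors bounds} gives
\[
\Ord_V(Y,K,\frs)\;\le\;\max\bigl\{\,\cF us_Y(K),\ \Ord_V(Y,U_0,\frs)\,\bigr\}+2g(C)\;=\;\max\bigl\{\,\cF us_Y(K),\ \Ord_V(Y,U_0,\frs)\,\bigr\}.
\]

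So it suffices to check that $\Ord_V(Y,U_0,\frs)=0$, i.e.\ that $\HFL^-_{red}(Y,U_0,\frs)$ vanishes. For this I would use a doubly-pointed Heegaard diagram $(\Sigma,\bal,\bbe,w,z)$ for $(Y,U_0)$ in which $w$ and $z$ lie in the same component of $\Sigma\sm(\bal\cup\bbe)$ and are joined by a short arc disjoint from all of the $\bal$- and $\bbe$-curves; such a diagram represents the unknot bounding a disk in a ball in $Y$, which is $U_0$. For every $\phi\in\pi_2(\x,\y)$ one has $n_w(\phi)=n_z(\phi)$, so once $U$ is set to $0$ the differential of $\CFL^-(Y,U_0,\frs)$ counts only the Maslov index $1$ classes with $n_w(\phi)=0$, with no factors of $V$. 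Hence $\CFL^-(Y,U_0,\frs)\cong\widehat{\CF}(Y,\frs)\otimes_{\F_2}\F_2[V]$ as complexes of $\F_2[V]$-modules, so $\HFL^-(Y,U_0,\frs)\cong\HFh(Y,\frs)\otimes_{\F_2}\F_2[V]$ is free over $\F_2[V]$; in particular $\HFL^-_{red}(Y,U_0,\frs)=0$ and $\Ord_V(Y,U_0,\frs)=0$. Feeding this into the displayed inequality yields $\Ord_V(Y,K,\frs)\le\cF us_Y(K)$.

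There is no real obstacle in this argument. The only two points that require any care are the identification $m=\cF us_Y(K)$ — which follows from the Euler characteristic count above, using that a fusion-minimizing ribbon concordance from $U_0$ to $K$ has equally many births and saddles — and the vanishing of the $V$-torsion of the local unknot in $Y$, which the special Heegaard diagram above makes immediate.
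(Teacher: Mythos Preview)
Your argument is correct and follows essentially the same route as the paper: apply the product case of Theorem~\ref{theorem: tors bounds} to a fusion-minimizing ribbon concordance from the unknot to $K$, note that $m=b=\cF us_Y(K)$, and use that $\HFL^-(Y,U_0,\frs)$ is torsion free. The paper simply asserts the torsion-freeness of $\HFL^-(Y,U_0,\frs)$, whereas you supply the (standard) Heegaard-diagram justification; otherwise the proofs are the same.
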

\begin{proof}
Let $(Y\times [0, 1], C): (Y, U) \ra (Y, K)$ be a ribbon concordance with $b = \cF us_Y(K)$ bands (and therefore $b$ local minima as well). Theorem 2 then implies 
\[
\Ord_V(Y, K, \frs) \le \max\{b, \Ord_V(Y, U, \frs)\} = b,
\]
since $\HFL^-(Y, U, \frs)$ is torsion free.
\end{proof}

In another direction, one could also consider ribbon concordances in $\Z$-homology cobordisms. Given a $\Z$-homology concordance $(W, \Sigma): (Y_0, U) \ra (Y_1, K)$, one can always find a Morse function $h$ on $W$ compatible with $\Sigma$ so that $h|_\Sigma$ is ribbon, so we will continue to require that the ambient manifold is ribbon as well. However, by imposing the condition that the ambient manifold is ribbon, we have introduced an asymmetry which makes generalizing the fusion number to ribbon $\Z$-homology concordances somewhat subtle; in $S^3\times [0,1]$, concordances \emph{from} the unknot with no local maxima can be turned around and viewed as concordances \emph{to} the unknot with no local minima. However, this is clearly not the case for a ribbon homology concordance $(W, \Sigma): (Y_0, U) \ra (Y_1, K)$, as $\overline{W}$ is not ribbon. 

Therefore, since ribbon homology link cobordisms to and from the unknot differ, we can consider both cases: on the one hand, we have ribbon $\Z$-homology concordances $(W, \Sigma): (Y', U) \ra (Y, K)$ (where $W$ is a ribbon $\Z$-homology cobordism and $\Sigma$ is an annulus with \emph{no local maxima}), and on the other, link cobordisms $(W, \Sigma): (Y, K) \ra (Y', U)$ where $W$ is a ribbon $\Z$-homology cobordism and $\Sigma$ is an annulus with \emph{no local minima}. 

For the latter notion, Theorem \ref{theorem: tors bounds} immediately implies the following. 

\begin{cor}\label{cor: fusion bound}
Let $K$ be a knot in a 3-manifold $Y$. If $(W, \Sigma): (Y, K) \ra (Y', U)$ is a link cobordism such that $W$ is a ribbon $\Z$-homology cobordism and $\Sigma$ is an annulus with no local minima and $b$ index 1 critical points, then
\[
\Ord_V(Y, K, \frs)\le b,
\]
for any $\frs \in \SpinC(Y).$
\end{cor}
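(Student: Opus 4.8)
The plan is to deduce the corollary directly from Theorem \ref{theorem: tors bounds}, with the unknot $(Y', U)$ playing the role of the target $(Y_1, K_1)$ in that theorem.

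First I would fix a Morse function $h \colon W \ra \R$ compatible with $\Sigma$ realizing the stated hypotheses: $W$ is ribbon with respect to $h$, and $h|_\Sigma$ has no index $0$ critical points and exactly $b$ index $1$ critical points; write $M$ for the number of its index $2$ critical points. Since $\Sigma$ is an annulus we have $\chi(\Sigma) = 0$, and the Morse-theoretic count $\chi(\Sigma) = c_0 - c_1 + c_2$ gives $0 = 0 - b + M$, so $M = b$; thus the number of deaths is forced to equal the number of bands. Moreover, because $W$ is a $\Z$-homology cobordism, every $\frs \in \SpinC(Y)$ extends uniquely over $W$, so it is legitimate to feed an arbitrary $\frs \in \SpinC(Y)$ into the theorem.

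Next I would invoke the first inequality of Theorem \ref{theorem: tors bounds}, which for this cobordism reads
\[
\Ord_V(Y, K, \frs|_Y) \le \max\{M, \Ord_V(Y', U)\} + 2g(\Sigma) = \max\{b, \Ord_V(Y', U)\},
\]
using $g(\Sigma) = 0$ and $M = b$. (Note that the hypothesis ``no local minima,'' i.e.\ $m = 0$, is not actually needed for this inequality, since $m$ does not appear in it; it only ensures that ``$b$ index $1$ critical points'' matches the count $M = b$.) Finally, since $U$ bounds an embedded disk in $Y'$, the reduced module $\HFL^-_{red}(Y', U, \frs')$ vanishes for every $\frs'$ (as already used in the proof of the previous corollary), so $\Ord_V(Y', U) = 0$ and the bound collapses to $\Ord_V(Y, K, \frs) \le b$.

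There is no real obstacle here, as all the content lives in Theorem \ref{theorem: tors bounds}; the only points deserving attention are the Euler-characteristic bookkeeping $M = b$, which is what converts a statement about deaths into a statement about bands, and the observation that the unknot's torsion order is genuinely $0$ (rather than, say, undefined), which holds because $\HFL^-$ of an unknot in any $Y'$ is $V$-torsion-free.
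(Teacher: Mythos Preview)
Your proposal is correct and is exactly the argument the paper has in mind: the paper simply says Theorem~\ref{theorem: tors bounds} ``immediately implies'' the corollary, and you have spelled out the Euler-characteristic bookkeeping $M=b$ and the fact that $\HFL^-(Y',U,\frs')$ is torsion free (as in the preceding corollary) that make this immediate.
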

\
Let $\cF us^{\wedge}(Y, K)$ be minimal number of bands over all link cobordisms of the form $(W, \Sigma): (Y, K) \ra (Y', U)$ where $W$ is a ribbon $\Z$-homology cobordism and $\Sigma$ is an annulus with no local minima. The previous result, of course, implies that $\Ord_V(Y, K, \frs) \le \cF us^{\wedge}(Y, K)$.\\

When considering $\Z$-homology concordances $(W, \Sigma): (Y, U) \ra (Y', K)$, some care is needed in defining a sensible notion of fusion number, since  handles can be traded between the surface and ambient 4-manifold (cf. Proposition \ref{prop: ribbon disks fusion number}). In particular, $\Ord_V(S^3, K)$ cannot possibly be a lower bound on the number of bands required in such link cobordisms.

It seems more sensible to consider the minimal number of 2-handles in the complement of such a concordance, i.e. the \emph{strong homotopy fusion number}, though the torsion order cannot be a lower bound for this quantity by \cite{HKP_ribbon_cables}; there Hom-Kang-Park produce knots in $S^3$ with homotopy fusion number 1 but with arbitrarily large torsion order. 

\bibliographystyle{alpha}{}
\bibliography{mathbib}
\end{document}